\documentclass[reqno]{amsart}
\usepackage[latin1]{inputenc}
\usepackage[pdftex]{graphicx}
\usepackage{graphics}
\usepackage{graphicx}
\usepackage{epstopdf}
\usepackage[pdftex,hyperindex]{hyperref}
\usepackage{floatflt}
\usepackage{multicol}
\usepackage[T1]{fontenc}
\usepackage{textcomp}
\usepackage{latexsym}
\usepackage{expdlist}
\usepackage{vmargin}
\usepackage{booktabs}
\usepackage{placeins}
\usepackage{array}
\usepackage{amsmath}
\usepackage{amssymb}
\usepackage{amsfonts}
\usepackage{verbatim}
\usepackage{array}
\usepackage{framed}
\usepackage{amsthm}
\usepackage{enumerate}
\usepackage{cite}
\input amssym.def
\input amssym
\theoremstyle{plain}
\newtheorem{theorem}{Theorem}[section]
\newtheorem{lemma}{Lemma}[section]
\newtheorem{corollary}{Corollary}[section]

\theoremstyle{remark}
\newtheorem{remark}{Remark}[section]

\theoremstyle{definition}

\newtheorem{example}{Example}[section]

\allowdisplaybreaks[1]

\begin{document}

\title[Tur\'{a}n's inequality]{On Tur\'{a}n's inequality: new general criteria, nonnegative representations and the class of generalized Chebyshev polynomials}

\author{Stefan Kahler}

\address{Fachgruppe Mathematik, RWTH Aachen University, Pontdriesch 14-16, 52062 Aachen, Germany}

\email{kahler@mathematik.rwth-aachen.de}

\date{\today}

\begin{abstract}
Originally, Tur\'{a}n's inequality states that if $(P_n(x))_{n\in\mathbb{N}_0}$ is the sequence of Legendre polynomials, then $\Delta_n(x):=P_n^2(x)-P_{n+1}(x)P_{n-1}(x)\geq0$ for all $n\in\mathbb{N}$ and $x\in[-1,1]$. Gasper specified the parameters $\alpha,\beta>-1$ for which the Jacobi polynomials $(R_n^{(\alpha,\beta)}(x))_{n\in\mathbb{N}_0}$ satisfy Tur\'{a}n's inequality. Frequently, such results rely on the specific structure of the concrete orthogonal polynomials under consideration. Therefore, special focus has been put on general criteria (whose importance was particularly emphasized by Nevai). We provide two general criteria for Tur\'{a}n's inequality in terms of the three-term recurrence relation and also deal with sharper estimations of the Tur\'{a}n determinants $\Delta_n(x)$. They extend earlier results of Szwarc and Berg--Szwarc. Applying our criteria to the class of generalized Chebyshev polynomials $(T_n^{(\alpha,\beta)}(x))_{n\in\mathbb{N}_0}$, which are the quadratic transformations of the Jacobi polynomials, we find the companion to Gasper's above-mentioned result. At this stage, we also obtain nonnegative representations of $\Delta_n(x)$. Finally, we study $2$-sieved polynomials and discuss further examples.
\end{abstract}

\keywords{Orthogonal polynomials, Tur\'{a}n's inequality, Tur\'{a}n determinants, recurrence relation, generalized Chebyshev polynomials, sieved polynomials}

\subjclass[2020]{Primary 42C05; Secondary 26D07, 33C45}

\maketitle

\numberwithin{equation}{section}

\section{Introduction}\label{sec:intro}

Let $(T_n(x))_{n\in\mathbb{N}_0}$ be the sequence of Chebyshev polynomials of the first kind, i.e., $T_n(\cos(\theta))=\cos(n\theta)$. As a trivial (and well-known) result from elementary trigonometry, one has $T_n^2(\cos(\theta))-T_{n+1}(\cos(\theta))T_{n-1}(\cos(\theta))=\sin^2(\theta)$ and therefore $T_n^2(x)-T_{n+1}(x)T_{n-1}(x)=1-x^2\geq0$ for all $n\in\mathbb{N}$ and $x\in[-1,1]$, with equality if and only if $x=\pm1$. For more general orthogonal polynomial sequences $(P_n(x))_{n\in\mathbb{N}_0}\subseteq\mathbb{R}[x]$, the `Tur\'{a}n determinants'
\begin{equation*}
\Delta_n(x):=P_n^2(x)-P_{n+1}(x)P_{n-1}(x)
\end{equation*}
are much more nontrivial. Moreover, `Tur\'{a}n's inequality', i.e.,
\begin{equation*}
\Delta_n(x)\geq0\;(n\in\mathbb{N},x\in[-1,1]),\;\Delta_n(x)>0\;(n\in\mathbb{N},x\in(-1,1)),
\end{equation*}
may be satisfied or may fail. Frequently, explicit computations are cumbersome or out of reach. The literature on Tur\'{a}n determinants and Tur\'{a}n's inequality is extensive. The topic is interesting and vivid for several reasons. One of these reasons is that Tur\'{a}n determinants have certain relations to the orthogonalization measure due to convergence properties \cite{BS09,FLS04,GvA91,MN83,MNT87,Os22,ST20,ST23,ST24,vA90}. Other reasons are the study of extreme zeros \cite{Kr05,Kr11} and lower bound estimates \cite{BS09}. There are various generalizations of Tur\'{a}n's inequality. A relation to the Riemann hypothesis can be found in \cite{CNV86}. Moreover, the recent publication \cite{HN24} shows relations to combinatorics and number theory.\\

If not stated otherwise, we assume that $(P_n(x))_{n\in\mathbb{N}_0}$ is given by a recurrence relation of the form $P_0(x)=1$,
\begin{equation}\label{eq:threetermrec}
x P_n(x)=a_n P_{n+1}(x)+c_n P_{n-1}(x)\;(n\in\mathbb{N}_0)
\end{equation}
with $c_0:=0$, $(c_n)_{n\in\mathbb{N}}\subseteq(0,1)$ and $a_n\equiv1-c_n$.\footnote{We make the usual convention that $0$ times (or divided by) something undefined (like $P_{-1}(x)$) shall be interpreted as $0$.} It is well-known from the theory of orthogonal polynomials that this is just equivalent to $(P_n(x))_{n\in\mathbb{N}_0}$ being normalized by $P_n(1)\equiv1$ and orthogonal w.r.t. a (unique) symmetric probability (Borel) measure $\mu$ on $\mathbb{R}$ with $|\mathrm{supp}\;\mu|=\infty$ and $\mathrm{supp}\;\mu\subseteq[-1,1]$ (cf. \cite{LO08}), and it is also well-known that the zeros of the polynomials are real, simple and located in the interior of the convex hull of $\mathrm{supp}\;\mu$\cite{Ch78}. In the literature, such sequences $(P_n(x))_{n\in\mathbb{N}_0}$ are also called (symmetric) `random walk polynomial sequences' due to their relation to random walks \cite{CSvD98,KM59,vDS93}.\\

If $(P_n(x))_{n\in\mathbb{N}_0}$ equals the Legendre polynomials, then Tur\'{a}n's inequality is satisfied \cite{Sz48,Tu50}.\footnote{This result, which is due to Tur\'{a}n and was communicated by him to Szeg{\H{o}} in a letter \cite{Tu50}, is the origin of the terminology ``Tur\'{a}n's inequality''. A historic overview can be found in \cite{BS09,Sz98}.} A remarkable (but also around 75 years old) extension concerns the (renormalized) ultraspherical polynomials, which are orthogonal w.r.t. $(1-x^2)^{\alpha}\chi_{(-1,1)}(x)\,\mathrm{d}x$ with $\alpha>-1$: independently from $\alpha$, Tur\'{a}n's inequality is always satisfied \cite{Da55,Ni13,Sk54,Sz50,Sz51,Sz48,TN51} (interesting refinements can be found in \cite{Ni23,NP15}). The above-mentioned Chebyshev polynomials of the first kind correspond to the special case $\alpha=-1/2$, and the Legendre polynomials correspond to the special case $\alpha=0$. An even more remarkable extension concerns the Jacobi polynomials $(R_n^{(\alpha,\beta)}(x))_{n\in\mathbb{N}_0}$, which are orthogonal w.r.t. $(1-x)^{\alpha}(1+x)^{\beta}\chi_{(-1,1)}(x)\,\mathrm{d}x$ with $\alpha,\beta>-1$ and shall also be normalized by $R_n^{(\alpha,\beta)}(1)\equiv1$ (note that these polynomials are generally not symmetric and hence not covered by our general assumptions on $(P_n(x))_{n\in\mathbb{N}_0}$; however, the definition of $\Delta_n(x)$ shall just be as above). In \cite{Ga72}, Gasper showed that the Jacobi polynomials satisfy Tur\'{a}n's inequality if and only if $\alpha\leq\beta$. After earlier contributions \cite{Sz62,Ga71b}, this nontrivial result finally solved a problem posed by Karlin and Szeg{\H{o}} \cite{KS6061}.\\

Despite its shortness and elegance, Gasper's proof has the disadvantage that it makes heavily use of the specific structure of the Jacobi polynomials, particularly concerning their differential equation and forward shift. This makes it difficult to generalize the approach to other classes. Turning back to general (symmetric) orthogonal polynomials $(P_n(x))_{n\in\mathbb{N}_0}$ as above, we recall a valuable general criterion from \cite[Theorem 1]{Sz98}:

\begin{theorem}\label{thm:szwarcturan}
\begin{enumerate}[(i)]
\item If $(c_n)_{n\in\mathbb{N}}\subseteq(0,1/2]$ is nondecreasing, then the polynomials $(P_n(x))_{n\in\mathbb{N}_0}$ satisfy Tur\'{a}n's inequality.
\item If $(c_n)_{n\in\mathbb{N}}\subseteq[1/2,1)$ is nonincreasing, then the polynomials $(P_n(x))_{n\in\mathbb{N}_0}$ satisfy Tur\'{a}n's inequality.
\end{enumerate}
\end{theorem}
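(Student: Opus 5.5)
The plan is to prove both parts at once by induction on $n$, using a one-step identity that links $\Delta_n$ to $\Delta_{n-1}$. Write $\delta_n:=c_n-c_{n-1}$ for $n\geq2$. Then $a_n-a_{n-1}=-\delta_n$. The hypothesis of (i) gives $\delta_n\geq0$ and $a_n\geq c_n$. The hypothesis of (ii) gives $\delta_n\leq0$ and $c_n\geq a_n$. In both cases
\[
\delta_n(a_n-c_n)\geq0 .
\]

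\textbf{Step 1: two expressions for $a_n\Delta_n$.} Substitute $P_{n+1}=(xP_n-c_nP_{n-1})/a_n$ from \eqref{eq:threetermrec} into $\Delta_n$. This gives the quadratic form
\[
a_n\Delta_n=a_nP_n^2-xP_nP_{n-1}+c_nP_{n-1}^2 .
\]
Next, substitute $P_{n-2}=(xP_{n-1}-a_{n-1}P_n)/c_{n-1}$ into $\Delta_{n-1}$. This gives
\[
c_{n-1}\Delta_{n-1}=c_{n-1}P_{n-1}^2-xP_{n-1}P_n+a_{n-1}P_n^2 .
\]
Subtracting, the cross terms cancel, and
\[
a_n\Delta_n=c_{n-1}\Delta_{n-1}+\delta_n\bigl(P_{n-1}^2-P_n^2\bigr).
\]

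\textbf{Step 2: the main obstacle.} The factor $P_{n-1}^2-P_n^2$ in Step 1 has no definite sign, and the quadratic form in Step 1 is not definite on all of $[-1,1]$ when $c_n\neq1/2$. My plan is to take a convex combination of the two expressions for $a_n\Delta_n$, with weight $1-t$ on the second and $t$ on the first, $t\in[0,1]$. This gives
\[
a_n\Delta_n=(1-t)c_{n-1}\Delta_{n-1}+Q_t ,
\]
where $Q_t$ is the quadratic form in $(P_n,P_{n-1})$ with
\[
\text{coefficient of }P_n^2:\ ta_n-(1-t)\delta_n,\qquad \text{coefficient of }P_{n-1}^2:\ tc_n+(1-t)\delta_n,\qquad \text{cross term: }-txP_nP_{n-1}.
\]
I will choose $t$ so that both diagonal coefficients become equal. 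Since $a_n+c_n=1$, this forces each of them to be $t/2$, i.e.
\[
(1-t)\delta_n=\tfrac{t}{2}(a_n-c_n),\qquad t=\frac{2\delta_n}{a_n-c_n+2\delta_n}
\]
(with $t:=1$ if $\delta_n=0$). Because $\delta_n$ and $a_n-c_n$ have the same sign, this $t$ lies in $[0,1]$. The degenerate case $a_n=c_n=1/2$ with $\delta_n\neq0$ gives $t=1$, which is also fine.

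With this choice,
\[
Q_t=\tfrac{t}{2}\bigl(P_n^2+P_{n-1}^2\bigr)-txP_nP_{n-1}\geq\tfrac{t}{2}(1-|x|)\bigl(P_n^2+P_{n-1}^2\bigr)\geq0
\]
for $|x|\leq1$.

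\textbf{Step 3: induction.} From Step 2,
\[
a_n\Delta_n\geq(1-t)c_{n-1}\Delta_{n-1}+\tfrac{t}{2}(1-|x|)\bigl(P_n^2+P_{n-1}^2\bigr).
\]
The base case is $n=1$: a direct computation gives $\Delta_1(x)=\frac{c_1}{a_1}(1-x^2)$, which is $\geq0$ on $[-1,1]$ and $>0$ on $(-1,1)$. For the inductive step, nonnegativity on $[-1,1]$ follows at once. For strict positivity on $(-1,1)$, look at the two summands. If $t<1$, the first summand is positive because $\Delta_{n-1}>0$. If $t=1$, the second summand is positive, because $P_n$ and $P_{n-1}$ have no common zero (their zeros interlace) and $|x|<1$. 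This proves both (i) and (ii).
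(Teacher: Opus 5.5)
Your proof is correct except for one slip in the degenerate case. If $\delta_n=0$ but $c_n\neq1/2$, the balancing condition $t(a_n-c_n)=2(1-t)\delta_n$ forces $t=0$, which is exactly what your formula returns, not $t=1$. With $t=1$ the diagonal coefficients of $Q_t$ would be $a_n\neq c_n$. Your lower bound for $Q_t$ would then be unjustified, since $a_nP_n^2-xP_nP_{n-1}+c_nP_{n-1}^2$ is indefinite for $|x|$ close to $1$. The convention $t:=1$ belongs only to the indeterminate case $\delta_n=0$, $c_n=1/2$, where every $t\in[0,1]$ balances the form. With $t=0$ the step is simply $a_n\Delta_n=c_{n-1}\Delta_{n-1}$, and your ``$t<1$'' branch gives strict positivity.

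The paper does not reprove this theorem. It cites Szwarc and recovers the result as a special case of Theorem~\ref{thm:kahlerturan}, checking that the hypotheses give $c_2>c_1/(1+c_1)$ and one of the chains $0\leq A_n\leq B_n\leq C_n$ or $0\geq A_n\geq B_n\geq C_n$. That theorem is proved by a two-step induction $\Delta_n\Rightarrow\Delta_{n+2}$ via the identity \eqref{eq:DeltanDeltanplus2combined}, with $\Delta_1,\Delta_2$ as base cases.

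Your proof is instead a one-step induction $\Delta_{n-1}\Rightarrow\Delta_n$, the type of implication the paper attributes to Szwarc. Your convex combination of the two quadratic-form expressions for $a_n\Delta_n$ absorbs the indefinite sign of $P_{n-1}^2-P_n^2$ into a single identity, so no case split is needed. This is shorter and self-contained, and it gives an explicit nonnegative representation: $Q_t=\frac{t}{2}\bigl[(P_n-xP_{n-1})^2+(1-x^2)P_{n-1}^2\bigr]$, and symmetrically with $P_n,P_{n-1}$ swapped. The same induction then yields $\Delta_n\geq K_n(1-x^2)$ with $K_n>0$.

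What your method cannot do is go beyond Szwarc's theorem. Requiring $\delta_n(a_n-c_n)\geq0$ for all $n\geq2$ is equivalent to (i) or (ii). Indeed, $c_n<1/2$ forces $c_{n+1}\in[c_n,1/2]$, $c_n=1/2$ forces $c_{n+1}=1/2$, and $c_n>1/2$ forces $c_{n+1}\in[1/2,c_n]$. The paper's two-step identity is what accommodates recurrence coefficients that oscillate between odd and even indices, as for the generalized Chebyshev polynomials.
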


We mention that in some way Tur\'{a}n's inequality for the chosen normalization $P_n(1)\equiv1$ is particularly ``strong'' compared to other normalizations, cf. \cite[Proposition 2]{Sz98} and \cite{BS09}. As a trivial fact, we recall that our general assumptions on $(P_n(x))_{n\in\mathbb{N}_0}$ yield that the Tur\'{a}n determinants vanish at $x=\pm1$. Theorem~\ref{thm:szwarcturan} covers the ultraspherical polynomials because for $\alpha\geq-1/2$ Theorem~\ref{thm:szwarcturan} (i) can be applied and for $\alpha\leq-1/2$ Theorem~\ref{thm:szwarcturan} (ii) is appropriate \cite[Section 5]{Sz98}. Further general criteria can be found in \cite{BS09,Kr05,Kr11,Ro78,Ro80,Sz98,Sz21}. The importance of general criteria in the context of Tur\'{a}n's inequality was particularly pointed out by Nevai in \cite[Problem 2.1]{vA10}.\\

Motivated by Gasper's result on Jacobi polynomials, in this paper we study a closely related generalization of the ultraspherical polynomials: let $\alpha,\beta>-1$. The generalized Chebyshev polynomials $(T_n^{(\alpha,\beta)}(x))_{n\in\mathbb{N}_0}$ are given via
\begin{equation}\label{eq:genchebrec}
c_{2n-1}=\frac{n+\beta}{2n+\alpha+\beta},\;c_{2n}=\frac{n}{2n+\alpha+\beta+1}
\end{equation}
for $n\in\mathbb{N}$, are orthogonal w.r.t.
\begin{equation}\label{eq:measuregencheb}
(1-x^2)^{\alpha}|x|^{2\beta+1}\chi_{(-1,1)}(x)\,\mathrm{d}x
\end{equation}
and relate to the Jacobi polynomials via the quadratic transformations
\begin{equation}\label{eq:quadratic}
T_{2n}^{(\alpha,\beta)}(x)=R_n^{(\alpha,\beta)}(2x^2-1),\;T_{2n+1}^{(\alpha,\beta)}(x)=x R_n^{(\alpha,\beta+1)}(2x^2-1)
\end{equation}
for $n\in\mathbb{N}_0$ \cite[Chapter V 2 (G)]{Ch78} \cite[Section 3 (f)]{La83}. Hypergeometric representations are available \cite{KLS10,MK12}. The ultraspherical polynomials are contained as special case $\beta=-1/2$. Concerning another generalization of the ultraspherical polynomials, the associated symmetric Pollaczek polynomials, Tur\'{a}n's inequality was recently established via Theorem~\ref{thm:szwarcturan} \cite{Ka21b}. Also the continuous $q$-ultraspherical polynomials can be tackled with Theorem~\ref{thm:szwarcturan} \cite{Sz98}. However, it is easy to see that the conditions of Theorem~\ref{thm:szwarcturan} are not fulfilled for $(T_n^{(\alpha,\beta)}(x))_{n\in\mathbb{N}_0}$ besides the special case $\beta=-1/2$. One of the central results of the present paper will be that $(T_n^{(\alpha,\beta)}(x))_{n\in\mathbb{N}_0}$ satisfies Tur\'{a}n's inequality if and only if $\beta\leq0$ (where the nontrivial part is to show that $\beta\leq0$ is sufficient), and we will obtain this by establishing new general criteria in the spirit of Nevai's \cite[Problem 2.1]{vA10}. These new general criteria rely on the recurrence relation \eqref{eq:threetermrec}, too, and one of them generalizes Theorem~\ref{thm:szwarcturan}. The paper is organized as follows: in Section~\ref{sec:general}, we formulate and prove the announced general criteria. We also deal with estimations of the form $\Delta_n(x)\geq K_n\cdot(1-x^2)\;(x\in[-1,1])$ with $K_n>0$ (which clearly imply Tur\'{a}n's inequality).\footnote{Such estimations were also studied in \cite{BS09,Ni23}, for instance.} In Section~\ref{sec:application}, we present the application of these general criteria to generalized Chebyshev polynomials and obtain the announced result on this class. We also obtain two very explicit nonnegative representations of the Tur\'{a}n determinants for every $\beta\leq0$.\footnote{With ``nonnegative representations'' of the Tur\'{a}n determinants we mean representations which directly show their nonnegativity on $[-1,1]$.} Furthermore, we briefly compare our approach to Gasper's above-mentioned proof for Jacobi polynomials and roughly sketch another variant for the class of generalized Chebyshev polynomials in this context.\footnote{We think that the presentation of several variants in the context of Tur\'{a}n's inequality is in good tradition: for instance, in \cite{Sz48} Szeg{\H{o}} gave four proofs for the Legendre polynomials, and the different proofs shed light on various aspects of the polynomials under consideration, are more or less suitable for generalizations and so on.} Moreover, in Section~\ref{sec:application} we present some further observations, including a more detailed study of the polynomials $(T_n^{(\alpha,0)}(x))_{n\in\mathbb{N}_0}$ and a comparison to the Jacobi polynomials. In Section~\ref{sec:2sieved}, we study certain sieved polynomials and briefly discuss the special case of $2$-sieved ultraspherical polynomials, which corresponds to the polynomials $(T_n^{(\alpha,\alpha)}(x))_{n\in\mathbb{N}_0}$. Finally, in Section~\ref{sec:furtherexamples} we turn back to our general criteria again and discuss further examples and ideas.\\

We mention that we used computer algebra systems (Maple) to find suitable simplifications and decompositions, find explicit formulas and estimations, get conjectures and so on. The final proofs can be understood without any computer usage, however.

\section{General criteria for Tur\'{a}n's inequality}\label{sec:general}

During the whole section, let $(P_n(x))_{n\in\mathbb{N}_0}$ be as in the general setting described in Section~\ref{sec:intro}. We first observe that $\Delta_1(x)=K_1\cdot(1-x^2)$ with $K_1:=c_1/a_1>0$. Moreover, one has $\Delta_2(x)=((a_1-a_2)x^2+c_1^2a_2)/(a_1^2a_2)\cdot(1-x^2)$ and $\Delta_2(x)=((a_2-a_1)(1-x^2)+a_1((1+c_1)c_2-c_1))/(a_1^2a_2)\cdot(1-x^2)$. Therefore, $\Delta_2(x)>0\;(x\in(-1,1))$ if and only if $c_2\geq c_1/(1+c_1)$, and if $c_2<c_1/(1+c_1)$, there even exists a constant $K_2>0$ such that $\Delta_2(x)\geq K_2\cdot(1-x^2)$ for all $x\in[-1,1]$. If $c_2<c_1/(1+c_1)$, however, then $\Delta_2(x)<0$ if $x\in(-1,1)$ is sufficiently close to $1$.\\

The following result is the first main result of this paper:

\begin{theorem}\label{thm:kahlerturan}
Let $c_2\geq c_1/(1+c_1)$, and let one of the two alternatives $0\leq A_n\leq B_n\leq C_n$, $0\geq A_n\geq B_n\geq C_n$ be satisfied for every $n\in\mathbb{N}$, where $A_n:=c_n(a_{n+2}-c_{n+2})$, $B_n:=(a_n-c_{n+2})c_{n+1}$ and $C_n:=(a_n-c_n)c_{n+2}$. Then the polynomials $(P_n(x))_{n\in\mathbb{N}_0}$ satisfy Tur\'{a}n's inequality. Moreover, if additionally $c_2>c_1/(1+c_1)$, then one has the following sharper estimation: for every $n\in\mathbb{N}$, there exists a constant $K_n>0$ such that $\Delta_n(x)\geq K_n\cdot(1-x^2)$ for all $x\in[-1,1]$.
\end{theorem}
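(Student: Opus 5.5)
The plan is to prove the theorem by induction on $n$, built on an exact recursive identity for the Tur\'{a}n determinants obtained from \eqref{eq:threetermrec}. The base cases are the facts recorded just before the statement: $\Delta_1(x)=K_1(1-x^2)$ with $K_1>0$, and $c_2\geq c_1/(1+c_1)$ gives $\Delta_2(x)>0$ on $(-1,1)$. If $c_2>c_1/(1+c_1)$ we even have $\Delta_2(x)\geq K_2(1-x^2)$ on $[-1,1]$.

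The first step is algebraic. I eliminate $P_{n+2}$ via $a_{n+1}P_{n+2}=xP_{n+1}-c_{n+1}P_n$, and where needed $P_{n+1}$ via $a_nP_{n+1}=xP_n-c_nP_{n-1}$. The goal is to write $\Delta_{n+1}$ (or $\Delta_{n+2}$) as a combination of the earlier determinants $\Delta_n,\Delta_{n-1}$ with nonnegative coefficients, plus a remainder. In Szwarc's proof of Theorem~\ref{thm:szwarcturan} the analogous one-step identity has the form $a_{n}\Delta_{n+1}=c_{n}\Delta_n+(\text{coefficient})\cdot P_n\cdot(\cdots)$. That form only controls monotone $c_n$ on one side of $1/2$.

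Since $A_n,B_n,C_n$ involve $c_n,c_{n+1},c_{n+2}$, I instead aim for a two-step identity of the shape
\begin{equation*}
(\text{positive})\cdot\Delta_{n+2}(x)=(\text{nonnegative})\cdot\Delta_{n+1}(x)+(\text{nonnegative})\cdot\Delta_n(x)+(1-x^2)\,Q_n(x).
\end{equation*}
Here $Q_n(x)$ is a quadratic form in two consecutive polynomials, say $P_n(x),P_{n+1}(x)$ (possibly after using $x^2=1-(1-x^2)$ to extract the factor $1-x^2$). Its coefficients are to be linear combinations of $A_n,B_n,C_n$. The natural target is something like
\begin{equation*}
Q_n=(B_n-A_n)\,u^2+(C_n-B_n)\,v^2+A_n(\cdots)^2,
\end{equation*}
or a form whose discriminant condition reduces exactly to $A_n\leq B_n\leq C_n$ together with the sign condition $A_n\geq0$. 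The second alternative $0\geq A_n\geq B_n\geq C_n$ should then correspond to the same identity with all three differences and signs reversed. To handle it, I would either multiply through by $-1$ and combine with a different arrangement of the determinant terms, or check that the reversed ordering makes $Q_n$ nonnegative through a sum of squares with the opposite cross-term sign.

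Finding this decomposition is the main obstacle. I would first compute the general quadratic form in $(P_n,P_{n+1})$ that equals $\Delta_{n+2}-\lambda\Delta_{n+1}-\mu\Delta_n$. I would then choose $\lambda,\mu$ so that the $x^2$-dependence factors as $1-x^2$, and finally read off the coefficients and match them with $A_n,B_n,C_n$.

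Given the identity, the induction is straightforward. Nonnegativity of $\Delta_n,\Delta_{n+1}$ on $[-1,1]$ and $Q_n\geq0$ give $\Delta_{n+2}\geq0$. Strict positivity on $(-1,1)$ follows because at least one coefficient, typically that of $\Delta_{n+1}$, is strictly positive; if it could vanish, I would use that $P_n,P_{n+1}$ have no common zeros, so $Q_n>0$ where the form is definite.

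For the sharper statement under $c_2>c_1/(1+c_1)$, I would carry the bound $\Delta_k\geq K_k(1-x^2)$ through the same identity. Since the $\Delta_{n+1}$ coefficient is strictly positive, one can take
\begin{equation*}
K_{n+2}:=\frac{\lambda_nK_{n+1}+\mu_nK_n}{(\text{positive})}>0,
\end{equation*}
with $Q_n\geq0$ simply dropped.
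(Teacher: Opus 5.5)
\textbf{There is a genuine gap.} Your architecture is the paper's: induction in steps of two from the base cases $\Delta_1,\Delta_2$, driven by an identity that expresses $\Delta_{n+2}$ through earlier determinants plus a nonnegative remainder. But the proposal stops where the proof begins. The identity is never found, and the shape you guess for it is not the right one. The paper's key identity is
\begin{multline*}
a_{n+1}^2a_{n+2}C_n\Delta_{n+2}(x)-a_{n+1}c_{n+1}c_{n+2}A_n\Delta_n(x)\\
=a_{n+1}c_{n+2}(C_n-B_n)(1-x^2)P_{n+1}^2(x)+c_{n+1}c_{n+2}(B_n-A_n)\bigl(xP_{n+1}(x)-P_n(x)\bigr)^2 .
\end{multline*}
It differs from your guess in four ways:
\begin{enumerate}[(i)]
\item There is no $\Delta_{n+1}$ term at all.
\item The remainder is a sum of two squares, so the hypotheses enter only as sign conditions, not through a discriminant.
\item As a quadratic form in $(u,v)=(P_{n+1},P_n)$, the remainder has no global factor $1-x^2$, because the summand $(xu-v)^2$ equals $(u-v)^2$ at $x=1$.
\item Dividing by $C_n$ handles both alternatives at once, since $A_n/C_n$, $(C_n-B_n)/C_n$ and $(B_n-A_n)/C_n$ are $\geq0$ in either case. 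No rearrangement with reversed signs is needed.
\end{enumerate}

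Your strict-positivity and $K_{n+2}$ arguments rest on a strictly positive $\Delta_{n+1}$-coefficient, and this cannot be arranged in general. Write $\Delta_{n+2}-\lambda\Delta_{n+1}-\mu\Delta_n$ as the form in $(u,v)$ you propose to compute by eliminating via the recurrence. At $x=1$ this form vanishes at $(1,1)$. If it is to be positive semidefinite there, it must equal $\kappa(u-v)^2$. This forces
\begin{equation*}
\lambda\,\frac{a_{n+1}-c_{n+1}}{a_{n+1}}+\mu\,\frac{a_n-c_n}{c_n}=\frac{c_{n+1}(a_{n+2}-c_{n+2})}{a_{n+1}a_{n+2}}.
\end{equation*}
Now take $c_1=c_2=1/4$ and $c_n=1/2$ for $n\geq3$. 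This sequence satisfies all hypotheses, including $c_2>c_1/(1+c_1)$. At $n=1$ the relation reads $\frac{2}{3}\lambda+2\mu=0$, so $\lambda=\mu=0$. Indeed $\Delta_3=(1-x^2)P_2^2+\frac{1}{9}(xP_2-P_1)^2$, and ``dropping $Q_n$'' gives only $K_3=0$.

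This is exactly the case $A_n=0$, which the paper treats separately. There $a_{n+2}=c_{n+2}=1/2$, so
\begin{equation*}
\Delta_{n+2}=(P_{n+3}-xP_{n+2})^2+(1-x^2)P_{n+2}^2=(P_{n+2}-xP_{n+3})^2+(1-x^2)P_{n+3}^2 .
\end{equation*}
Hence $\Delta_{n+2}\geq K(1-x^2)$ with $K=\min_{[-1,1]}\max\{P_{n+2}^2,P_{n+3}^2\}>0$, because consecutive polynomials have no common zero. This case also covers $C_n=0$, where the identity is vacuous. Your no-common-zeros fallback is the right tool here, but it must be applied to the remainder, including near $x=\pm1$.

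When $A_n\neq0$, the paper propagates positivity and the bound $K_{n+2}=\frac{c_{n+1}c_{n+2}A_n}{a_{n+1}a_{n+2}C_n}K_n$ through the $\Delta_n$-term. So the induction runs separately along odd and even indices, not through $\Delta_{n+1}$.
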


Note that Theorem~\ref{thm:kahlerturan} is a generalization of Theorem~\ref{thm:szwarcturan}, which can be seen as follows: under the conditions of Theorem~\ref{thm:szwarcturan} (i), one has $c_2\geq c_1>c_1/(1+c_1)$; moreover, for every $n\in\mathbb{N}$ the first of the two alternatives in Theorem~\ref{thm:kahlerturan} is obviously satisfied. Under the conditions of Theorem~\ref{thm:szwarcturan} (ii), however, one can estimate $c_2\geq1/2>c_1/(1+c_1)$, and for every $n\in\mathbb{N}$ the second of the two alternatives in Theorem~\ref{thm:kahlerturan} is satisfied. The latter can be seen as follows: under the conditions of Theorem~\ref{thm:szwarcturan} (ii), one has $(c_{n+2}-a_n)c_{n+2}-c_n(c_{n+2}-a_{n+2})=(c_n-c_{n+2})a_{n+2}\geq0$ and $(c_n-a_n)c_{n+2}-(c_{n+2}-a_n)c_n=(c_n-c_{n+2})a_n\geq0$ for every $n\in\mathbb{N}$. In particular, the additional condition ``$c_2>c_1/(1+c_1)$'' concerning the second assertion of Theorem~\ref{thm:kahlerturan} is necessarily fulfilled under the conditions of Theorem~\ref{thm:szwarcturan}.\\

We now come to the proof of Theorem~\ref{thm:kahlerturan}. It will be done by induction, and the crucial idea compared to Szwarc's result Theorem~\ref{thm:szwarcturan} is to study implications of the form ``$\Delta_n(x)\geq0$ $\Rightarrow$ $\Delta_{n+2}(x)\geq0$'' rather than implications of the form ``$\Delta_n(x)\geq0$ $\Rightarrow$ $\Delta_{n+1}(x)\geq0$''.

\begin{proof}[Proof of Theorem~\ref{thm:kahlerturan}]
Let $n\in\mathbb{N}$. By the recurrence relation \eqref{eq:threetermrec}, we have
\begin{equation}\label{eq:Deltanexpand}
\Delta_n(x)=\frac{a_n}{c_n}P_{n+1}^2(x)-\frac{x}{c_n}P_{n+1}(x)P_n(x)+P_n^2(x)
\end{equation}
(such an expansion can be motivated from Tur\'{a}n's paper \cite{Tu50}) and
\begin{equation}\label{eq:Deltanplus2expand}
\begin{split}
\Delta_{n+2}(x)&=\frac{(a_{n+2}-a_{n+1})x^2+a_{n+1}^2c_{n+2}}{a_{n+1}^2a_{n+2}}P_{n+1}^2(x)+\frac{(a_{n+1}-2a_{n+2})c_{n+1}x}{a_{n+1}^2a_{n+2}}P_{n+1}(x)P_n(x)\\
&\quad+\frac{c_{n+1}^2}{a_{n+1}^2}P_n^2(x).
\end{split}
\end{equation}
Combining \eqref{eq:Deltanexpand} and \eqref{eq:Deltanplus2expand}, we obtain the essential ingredient
\begin{equation}\label{eq:DeltanDeltanplus2combined}
\begin{split}
&a_{n+1}^2a_{n+2}C_n\Delta_{n+2}(x)-a_{n+1}c_{n+1}c_{n+2}A_n\Delta_n(x)\\
&=a_{n+1}c_{n+2}(C_n-B_n)(1-x^2)P_{n+1}^2(x)+c_{n+1}c_{n+2}(B_n-A_n)(x P_{n+1}(x)-P_n(x))^2
\end{split}
\end{equation}
after a somewhat tedious calculation.\footnote{We particularly emphasize that computer algebra systems (Maple) were very helpful in finding the ``right'' combination of $\Delta_{n+2}(x)$ and $\Delta_n(x)$ in \eqref{eq:DeltanDeltanplus2combined} (cf. also the general remark at the end of Section~\ref{sec:intro}).} Now we can conclude as follows:
\begin{enumerate}[(i)]
\item If $A_n\neq0$, then \eqref{eq:DeltanDeltanplus2combined} implies that $\Delta_{n+2}(x)\geq c_{n+1}c_{n+2}A_n/(a_{n+1}a_{n+2}C_n)\cdot\Delta_n(x)$ for all $x\in[-1,1]$, where either $A_n$ and $C_n$ are both positive or $A_n$ and $C_n$ are both negative.
\item If $A_n=0$, then $a_{n+2}=c_{n+2}=1/2$. Consequently, \eqref{eq:Deltanexpand} (with $n$ replaced by $n+2$) yields $\Delta_{n+2}(x)=(P_{n+3}(x)-x P_{n+2}(x))^2+(1-x^2)P_{n+2}^2(x)$ and $\Delta_{n+2}(x)=(P_{n+2}(x)-x P_{n+3}(x))^2+(1-x^2)P_{n+3}^2(x)$. Therefore, $\Delta_{n+2}(x)\geq K_n\cdot(1-x^2)$ for all $x\in[-1,1]$, where $K_n:=\min_{x\in[-1,1]}\max\{P_{n+2}^2(x),P_{n+3}^2(x)\}$ is positive because otherwise $P_{n+2}(x)$ and $P_{n+3}(x)$ would have a common zero (a related argument can be found in Tur\'{a}n's paper \cite{Tu50}).
\end{enumerate}
Therefore, for every $n\in\mathbb{N}$ we have shown the implication ``$\Delta_n(x)>0\;(x\in(-1,1))$ $\Rightarrow$ $\Delta_{n+2}(x)>0\;(x\in(-1,1))$'', as well as the implication ``($\Delta_n(x)\geq K_n\cdot(1-x^2)\;(x\in[-1,1])$ for some $K_n>0$) $\Rightarrow$ ($\Delta_{n+2}(x)\geq K_{n+2}\cdot(1-x^2)\;(x\in[-1,1])$ for some $K_{n+2}>0$). Concerning $\Delta_1(x)$ and $\Delta_2(x)$, we refer to the beginning of the section.
\end{proof}

Before coming to our second general criterion for Tur\'{a}n's inequality, we need some preliminaries and additional notation. For each $m\in\mathbb{N}_0$, let $(P_{m,n}(x))_{n\in\mathbb{N}_0}$ be orthogonal w.r.t. $(1-x^2)^m\,\mathrm{d}\mu(x)$ and normalized by $P_{m,n}(1)\equiv1$. The polynomials $(P_{m,n}(x))_{n\in\mathbb{N}_0}$ relate to $(P_{m+1,n}(x))_{n\in\mathbb{N}_0}$ via
\begin{equation}\label{eq:Pnast}
P_{m+1,n}(x)=C_{m,n}\frac{P_{m,n+2}(x)-P_{m,n}(x)}{1-x^2},
\end{equation}
where $C_{m,n}<0$ is independent of $x$ \cite{BS09} (the reference only considers the special case $m=0$).\footnote{Formulas for $C_{0,n}$ can be found in \cite[Section 1]{Ka23b}.} Moreover, we set $c_{m,0}\equiv0$ and define $(c_{m,n})_{n\in\mathbb{N}}$ and $(a_{m,n})_{n\in\mathbb{N}}$ by the expansions
\begin{equation}\label{eq:threetermrecast}
x P_{m,n}(x)=a_{m,n}P_{m,n+1}(x)+c_{m,n}P_{m,n-1}(x)\;(n\in\mathbb{N}_0).
\end{equation}
It is clear that $a_{m,n}\equiv1-c_{m,n}$, and we have
\begin{equation}\label{eq:astreccaststart}
C_{m,0}=-a_{m,1}
\end{equation}
and
\begin{equation}\label{eq:astreccast}
\frac{C_{m,n}}{C_{m,n-1}}=\frac{a_{m,n+1}}{a_{m+1,n-1}}=\frac{c_{m+1,n}}{c_{m,n}}\;(n\in\mathbb{N}).
\end{equation}
\eqref{eq:astreccaststart} is immediate from \eqref{eq:Pnast} and \eqref{eq:threetermrecast}. \eqref{eq:astreccast} can be seen as follows: multiplying \eqref{eq:threetermrecast} (with $m$ replaced by $m+1$) by $1-x^2$ and combining the result with \eqref{eq:Pnast} and \eqref{eq:threetermrecast} (now for $m$), we get
\begin{align*}
&a_{m,n+2}C_{m,n}P_{m,n+3}(x)+(c_{m,n+2}-a_{m,n})C_{m,n}P_{m,n+1}(x)-c_{m,n}C_{m,n}P_{m,n-1}(x)\\
&=a_{m+1,n}C_{m,n+1}P_{m,n+3}(x)+(c_{m+1,n}C_{m,n-1}-a_{m+1,n}C_{m,n+1})P_{m,n+1}(x)\\
&\quad-c_{m+1,n}C_{m,n-1}P_{m,n-1}(x)
\end{align*}
for all $n\in\mathbb{N}_0$. Comparing the leading coefficients of $P_{m,n+3}(x)$ and $P_{m,n-1}(x)$, we obtain \eqref{eq:astreccast}. \eqref{eq:astreccast} particularly yields that
\begin{equation}\label{eq:astrecm}
a_{m,n+1}c_{m,n}\equiv c_{m+1,n}a_{m+1,n-1}.
\end{equation}
In the language of chain sequences \cite{Ch78,Wa48}, \eqref{eq:astrecm} can be expressed as follows: $(c_{m+1,n})_{n\in\mathbb{N}_0}$ is the minimal parameter sequence of $(a_{m,n+1}c_{m,n})_{n\in\mathbb{N}}$. In the following, let $\Delta_{m,n}(x):=(P_{m,n}(x))^2-P_{m,n+1}(x)P_{m,n-1}(x)$ for every $m\in\mathbb{N}_0$. \cite[Theorem 2.3]{BS09} and \eqref{eq:astreccast} yield:

\begin{lemma}\label{lma:bergszwarcturan}
One has $\Delta_{n+1}(x)=s_n\cdot(1-x^2)P_{1,n}^2(x)+t_n\cdot(1-x^2)\Delta_{1,n}(x)$ for all $n\in\mathbb{N}_0$, where $s_n:=(a_{n+1}c_{n+1}-a_{1,n}c_{1,n})/C_{0,n}^2$, $t_n:=a_{1,n}c_{1,n}/C_{0,n}^2$.
\end{lemma}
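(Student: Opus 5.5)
The plan is to derive the identity directly from \eqref{eq:Pnast} with $m=0$, in the form $P_{n+2}(x)-P_n(x)=(1-x^2)P_{1,n}(x)/C_{0,n}$, rather than taking \cite[Theorem 2.3]{BS09} as a black box. The idea is to express $\Delta_{n+1}$ through $P_n,P_{n+2}$ and then through $P_{1,n}$, $P_{1,n\pm1}$.

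\textbf{Step 1: eliminate $P_{n+1}$.} Write $x P_{n+1}=a_{n+1}P_{n+2}+c_{n+1}P_n$ and square it:
\[
x^2P_{n+1}^2-x^2P_{n+2}P_n=a_{n+1}^2P_{n+2}^2+c_{n+1}^2P_n^2+(2a_{n+1}c_{n+1}-x^2)P_{n+2}P_n.
\]
This gives $x^2\Delta_{n+1}$ as a quadratic form in $(P_n,P_{n+2})$. Substituting $P_{n+2}=P_n+(1-x^2)Q/C_{0,n}$ with $Q:=P_{1,n}$, the terms without $(1-x^2)$ collect to $(a_{n+1}+c_{n+1})^2P_n^2-x^2P_n^2=(1-x^2)P_n^2$. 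Hence everything is divisible by $1-x^2$, which is consistent with $\Delta_{n+1}(\pm1)=0$.

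\textbf{Step 2: remove the factor $x^2$.} Dividing by $x^2$ is awkward, so I would instead work with the alternative expansion \eqref{eq:Deltanexpand} (with $n$ replaced by $n+1$). Alternatively, I would use the Christoffel--Darboux-type fact that $\Delta_{n+1}/(1-x^2)$ is a polynomial. I would then match this polynomial against $s_nP_{1,n}^2+t_n\Delta_{1,n}$.

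\textbf{Step 3: handle $\Delta_{1,n}$.} To bring in $\Delta_{1,n}=P_{1,n}^2-P_{1,n+1}P_{1,n-1}$, I would use \eqref{eq:Pnast} for $n\pm1$:
\[
P_{1,n+1}P_{1,n-1}=C_{0,n+1}C_{0,n-1}\,\frac{(P_{n+3}-P_{n+1})(P_{n+1}-P_{n-1})}{(1-x^2)^2}.
\]
I would then reduce $P_{n+3}$ and $P_{n-1}$ to $P_n,P_{n+1}$ via the recurrence \eqref{eq:threetermrec}. The ratio $C_{0,n+1}C_{0,n-1}/C_{0,n}^2$ is handled by \eqref{eq:astreccast}, which gives
\[
\frac{C_{0,n+1}}{C_{0,n}}=\frac{c_{1,n+1}}{c_{n+1}},\qquad \frac{C_{0,n}}{C_{0,n-1}}=\frac{a_{n+1}}{a_{1,n-1}},
\]
and \eqref{eq:astrecm} relates $a_{1,n}c_{1,n}$ to the original coefficients. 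After this, both sides become quadratic forms in $(P_n,P_{n+1})$ with coefficients polynomial in $x$, divided by powers of $(1-x^2)$, and I would compare coefficients.

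\textbf{Alternative route.} A cleaner option is to cite \cite[Theorem 2.3]{BS09} directly. Presumably it states
\[
\Delta_{n+1}=(1-x^2)\bigl[\alpha_nP_{1,n}^2+\beta_n\Delta_{1,n}\bigr]
\]
with coefficients expressed via the $C_{0,k}$ and the recurrence coefficients. Then the only work is rewriting those coefficients as $s_n$ and $t_n$ using \eqref{eq:astreccaststart} and \eqref{eq:astreccast}.

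\textbf{Main obstacle.} I expect the bookkeeping in Step 3 to be the hardest part: showing that the $\Delta_{1,n}$-coefficient is exactly $a_{1,n}c_{1,n}/C_{0,n}^2$ and that the remaining $P_{1,n}^2$-coefficient is $a_{n+1}c_{n+1}/C_{0,n}^2-t_n$. I would first check the case $n=0$, where $P_{1,0}=1$, $\Delta_{1,0}=1$ and $C_{0,0}=-a_1$. There $\Delta_1=(c_1/a_1)(1-x^2)$, and $s_0+t_0=a_1c_1/a_1^2=c_1/a_1$ matches, which fixes the normalization. The general case would then follow by the coefficient comparison above.
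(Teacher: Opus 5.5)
Your fallback route, citing \cite[Theorem 2.3]{BS09} and rewriting its coefficients with \eqref{eq:astreccaststart} and \eqref{eq:astreccast}, is exactly the paper's proof; the paper gives nothing beyond that citation. Your main route, a direct verification from \eqref{eq:Pnast}, \eqref{eq:astreccast} and the recurrence, is genuinely different and it works. The bookkeeping you are worried about is short, and Steps 1 and 2 (squaring, the factor $x^2$, Christoffel--Darboux) are not needed. Since $s_n+t_n=a_{n+1}c_{n+1}/C_{0,n}^2$, inserting \eqref{eq:Pnast} for the indices $n$ and $n\pm1$ gives, for $n\geq1$ and $x\neq\pm1$,
\begin{equation*}
(1-x^2)\bigl[s_nP_{1,n}^2+t_n\Delta_{1,n}\bigr]=\frac{a_{n+1}c_{n+1}(P_{n+2}-P_n)^2-t_nC_{0,n+1}C_{0,n-1}(P_{n+3}-P_{n+1})(P_{n+1}-P_{n-1})}{1-x^2}.
\end{equation*}
Your pairing of the ratios, combined with \eqref{eq:astrecm} twice, gives $t_nC_{0,n+1}C_{0,n-1}=a_{n+2}c_n$. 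The other pairing from \eqref{eq:astreccast}, namely $C_{0,n+1}/C_{0,n}=a_{n+2}/a_{1,n}$ and $C_{0,n-1}/C_{0,n}=c_n/c_{1,n}$, gives it at once. The recurrence yields
\begin{align*}
a_{n+1}(P_{n+2}-P_n)&=xP_{n+1}-P_n, & c_{n+1}(P_{n+2}-P_n)&=P_{n+2}-xP_{n+1},\\
a_{n+2}(P_{n+3}-P_{n+1})&=xP_{n+2}-P_{n+1}, & c_n(P_{n+1}-P_{n-1})&=P_{n+1}-xP_n.
\end{align*}
Hence the numerator equals
\begin{equation*}
(xP_{n+1}-P_n)(P_{n+2}-xP_{n+1})-(xP_{n+2}-P_{n+1})(P_{n+1}-xP_n)=(1-x^2)\bigl(P_{n+1}^2-P_nP_{n+2}\bigr).
\end{equation*}
This is a formal identity in $P_n,P_{n+1},P_{n+2}$, so you need no reduction to $(P_n,P_{n+1})$ and no coefficient comparison. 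For $n=0$ one has $t_0=0$ because $c_{1,0}=0$. So $C_{0,-1}$ never enters, and your check using $C_{0,0}=-a_1$ settles that case.

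On what each approach buys: the paper's citation is shortest, but it hides where $s_n$ and $t_n$ come from and relies on translating the coefficients of \cite[Theorem 2.3]{BS09}. Your route is self-contained once \eqref{eq:Pnast} (still taken from \cite{BS09}) and \eqref{eq:astreccast} (derived in the paper) are granted. It also shows that the lemma is just the elementary identity $(1-x^2)\Delta_{n+1}=a_{n+1}c_{n+1}(P_{n+2}-P_n)^2-a_{n+2}c_n(P_{n+3}-P_{n+1})(P_{n+1}-P_{n-1})$, rewritten via \eqref{eq:Pnast}.
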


The following result is based on Lemma~\ref{lma:bergszwarcturan} and the second main result of this paper:

\begin{theorem}\label{thm:kahlerturan2}
Let $a_{m,n+1}c_{m,n+1}\geq a_{m+1,n}c_{m+1,n}$ for all $m\in\mathbb{N}_0$ and $n\in\mathbb{N}$. Then the polynomials $(P_n(x))_{n\in\mathbb{N}_0}$ satisfy Tur\'{a}n's inequality, and one has the nonnegative representation
\begin{equation}\label{eq:Deltamnexplred}
\Delta_n(x)=\sum_{k=1}^n(1-x^2)^k P_{k,n-k}^2(x)s_{k-1,n-k}\prod_{j=1}^{k-1}t_{j-1,n-j}\;(n\in\mathbb{N})
\end{equation}
of the Tur\'{a}n determinants, where $s_{m,n}:=(a_{m,n+1}c_{m,n+1}-a_{m+1,n}c_{m+1,n})/C_{m,n}^2$ and $t_{m,n}:=a_{m+1,n}c_{m+1,n}/C_{m,n}^2$ for $m,n\in\mathbb{N}_0$. Moreover, if additionally $a_{n+1}c_{n+1}>a_{1,n}c_{1,n}$ for all $n\in\mathbb{N}$, then for every $n\in\mathbb{N}$ there exists a constant $K_n>0$ such that $\Delta_n(x)\geq K_n\cdot(1-x^2)$ for all $x\in[-1,1]$.
\end{theorem}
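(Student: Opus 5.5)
The plan is to iterate Lemma~\ref{lma:bergszwarcturan} along the tower $m=0,1,2,\dots$ of modified measures $(1-x^2)^m\,\mathrm{d}\mu(x)$.

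The first step is to observe that Lemma~\ref{lma:bergszwarcturan} holds verbatim at every level $m$. The measure $(1-x^2)^m\,\mathrm{d}\mu(x)$ is again symmetric, has infinite support contained in $[-1,1]$, and is normalizable to a probability measure. Hence $(P_{m,n}(x))_{n\in\mathbb{N}_0}$ fits the general setting of Section~\ref{sec:intro}, with $c_{m,n}\in(0,1)$. Applying \cite[Theorem 2.3]{BS09} together with \eqref{eq:astreccast} at level $m$ therefore gives
\begin{equation*}
\Delta_{m,n+1}(x)=s_{m,n}(1-x^2)P_{m+1,n}^2(x)+t_{m,n}(1-x^2)\Delta_{m+1,n}(x)\quad(m,n\in\mathbb{N}_0).
\end{equation*}
The hypothesis says exactly that $s_{m,n}\geq0$ for all $m\in\mathbb{N}_0$ and $n\in\mathbb{N}$. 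For $n=0$ we need $a_{m,1}c_{m,1}\geq a_{m+1,0}c_{m+1,0}=0$, which holds trivially. Also $t_{m,n}\geq0$ always, and $t_{m,n}>0$ for $n\geq1$.

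The second step is to unroll this recursion to obtain \eqref{eq:Deltamnexplred}. Starting from $\Delta_n=\Delta_{0,n}$, apply the level-$0$ identity with index $n-1$, then the level-$1$ identity for $\Delta_{1,n-1}$, and so on. After $k$ steps we have produced the terms $(1-x^2)^jP_{j,n-j}^2\,s_{j-1,n-j}\prod_{i=1}^{j-1}t_{i-1,n-i}$ for $j=1,\dots,k$, plus the remainder $(1-x^2)^k\prod_{i=1}^k t_{i-1,n-i}\,\Delta_{k,n-k}$. The recursion stops at $k=n$, where $\Delta_{n,0}$ appears. I would formalize this as an induction on $k$, or equivalently an induction on $n$ that is uniform in $m$, proving the formula for $\Delta_{m,n}$ with shifted indices. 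The last step needs care because $\Delta_{m,0}=P_{m,0}^2-P_{m,1}P_{m,-1}=1$ under the paper's convention. So the final remainder is $(1-x^2)^n\prod t\cdot 1$. This must coincide with the $k=n$ summand $(1-x^2)^nP_{n,0}^2\,s_{n-1,0}\prod_{j=1}^{n-1}t_{j-1,n-j}$. The cleanest route is to stop the unrolling at $\Delta_{n-1,1}$ and use $\Delta_{m,1}=K_1^{(m)}(1-x^2)$, with $K_1^{(m)}=c_{m,1}/a_{m,1}$ from the beginning of the section. One then checks that $c_{m,1}/a_{m,1}=s_{m,0}$. Indeed, \eqref{eq:astreccaststart} gives $C_{m,0}^2=a_{m,1}^2$ and $c_{m+1,0}=0$, so $s_{m,0}=a_{m,1}c_{m,1}/a_{m,1}^2=c_{m,1}/a_{m,1}$. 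This bookkeeping of the indices and the base case is where I expect the only real friction.

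The third step draws the conclusions. Every summand in \eqref{eq:Deltamnexplred} is nonnegative on $[-1,1]$, so $\Delta_n(x)\geq0$ there. For strict positivity on $(-1,1)$, the $k=n$ summand equals $(1-x^2)^n s_{n-1,0}\prod_{j=1}^{n-1}t_{j-1,n-j}$. Here $s_{n-1,0}=c_{n-1,1}/a_{n-1,1}>0$ and each $t_{j-1,n-j}$ has second index $n-j\geq1$, hence is positive. So this summand is strictly positive on $(-1,1)$, which gives Tur\'{a}n's inequality.

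For the sharper estimate, assume additionally that $s_{0,n-1}>0$. For $n\geq2$ this is the hypothesis $a_nc_n>a_{1,n-1}c_{1,n-1}$; for $n=1$ it is automatic since $\Delta_1=K_1(1-x^2)$. Then
\begin{equation*}
\Delta_n(x)\geq(1-x^2)\bigl(s_{0,n-1}P_{1,n-1}^2(x)+t_{0,n-1}(1-x^2)\Delta_{1,n-1}(x)\bigr),
\end{equation*}
and the bracket is a polynomial that is $\geq0$ on $[-1,1]$. It suffices to show the bracket has no zero in $[-1,1]$; compactness then gives a positive minimum $K_n$. At $x=\pm1$ the bracket equals $s_{0,n-1}>0$, because $P_{1,n-1}(\pm1)^2=1$. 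On $(-1,1)$, the second term is $(1-x^2)t_{0,n-1}\Delta_{1,n-1}(x)$. By the already proven Tur\'{a}n inequality for the level-$1$ family, whose hypotheses are inherited because the assumption is stated for all $m$, this term is strictly positive. Hence the bracket is positive on all of $[-1,1]$, and the estimate follows.
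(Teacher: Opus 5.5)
Your proposal is correct and is essentially the paper's proof. It runs an induction on $n$ uniform in $m$ (your unrolling) through the level-$m$ version of Lemma~\ref{lma:bergszwarcturan}, takes the base case $\Delta_{m,1}=s_{m,0}(1-x^2)$ from \eqref{eq:astreccaststart}, gets strict positivity from the $k=n$ summand, and finishes with a compactness argument for $K_n$; there you invoke the strict Tur\'{a}n inequality of the level-$1$ family where the paper simply keeps the $k=1$ and $k=n$ summands, which amounts to the same thing. One small slip in your aside: if you unroll all the way to $\Delta_{n,0}$, the remainder carries the factor $t_{n-1,0}=0$ and vanishes rather than having to coincide with the $k=n$ summand, but the route you actually adopt, stopping at $\Delta_{n-1,1}$, is correct.
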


\begin{proof}
We show an (only apparently) stronger assertion than \eqref{eq:Deltamnexplred}, namely that, for every $n\in\mathbb{N}$,
\begin{equation}\label{eq:Deltamnexpl}
\Delta_{m,n}(x)=\sum_{k=1}^n(1-x^2)^k P_{m+k,n-k}^2(x)s_{m+k-1,n-k}\prod_{j=1}^{k-1}t_{m+j-1,n-j}\;(m\in\mathbb{N}_0).
\end{equation}
For $n=1$, due to \eqref{eq:astreccaststart} both sides of \eqref{eq:Deltamnexpl} reduce to $c_{m,1}/a_{m,1}\cdot(1-x^2)$. Now let $n\in\mathbb{N}$ be arbitrary but fixed, and assume that \eqref{eq:Deltamnexpl} is true for $n$. Then, as a consequence of Lemma~\ref{lma:bergszwarcturan}, we have
\begin{align*}
&\Delta_{m,n+1}(x)-s_{m,n}\cdot(1-x^2)P_{m+1,n}^2(x)\\
&=t_{m,n}\cdot(1-x^2)\Delta_{m+1,n}(x)\\
&=t_{m,n}\cdot(1-x^2)\sum_{k=1}^n(1-x^2)^k P_{m+1+k,n-k}^2(x)s_{m+k,n-k}\prod_{j=1}^{k-1}t_{m+j,n-j}\\
&=\sum_{k=1}^n(1-x^2)^{k+1}P_{m+1+k,n-k}^2(x)s_{m+k,n-k}\prod_{j=0}^{k-1}t_{m+j,n-j}\\
&=\sum_{k=1}^n(1-x^2)^{k+1}P_{m+1+k,n-k}^2(x)s_{m+k,n-k}\prod_{j=1}^k t_{m+j-1,n+1-j}\\
&=\sum_{k=2}^{n+1}(1-x^2)^k P_{m+k,n+1-k}^2(x)s_{m+k-1,n+1-k}\prod_{j=1}^{k-1}t_{m+j-1,n+1-j}
\end{align*}
for all $m\in\mathbb{N}_0$. Hence, \eqref{eq:Deltamnexpl} is satisfied for $n+1$, too. Considering the special case $m=0$ of \eqref{eq:Deltamnexpl}, we obtain the representation \eqref{eq:Deltamnexplred}. By the condition, we have $s_{m,n}\geq0$ for all $m,n\in\mathbb{N}_0$ and $s_{m,0}>0$ for all $m\in\mathbb{N}_0$. Since $t_{m,n}>0$ for all $m\in\mathbb{N}_0$ and $n\in\mathbb{N}$, \eqref{eq:Deltamnexplred} establishes Tur\'{a}n's inequality. We now additionally assume that $a_{n+1}c_{n+1}>a_{1,n}c_{1,n}$ (and therefore $s_{0,n}>0$) for all $n\in\mathbb{N}$. Then for every $n\geq2$ \eqref{eq:Deltamnexplred} implies that
\begin{equation*}
\Delta_n(x)\geq\left(P_{1,n-1}^2(x)s_{0,n-1}+(1-x^2)^{n-1}s_{n-1,0}\prod_{j=1}^{n-1}t_{j-1,n-j}\right)\cdot(1-x^2)\geq K_n\cdot(1-x^2)
\end{equation*}
for all $x\in[-1,1]$, where $K_n>0$ can be chosen as the minimum of $P_{1,n-1}^2(x)s_{0,n-1}+(1-x^2)^{n-1}s_{n-1,0}\prod_{j=1}^{n-1}t_{j-1,n-j}$ on $[-1,1]$. Recall that the case $n=1$ is clear.
\end{proof}

\begin{corollary}\label{cor:kahlerturan2}
Let $c_{m+1,n}\leq c_{m,n+1}$ for all $m\in\mathbb{N}_0$ and $n\in\mathbb{N}$. Then the polynomials $(P_n(x))_{n\in\mathbb{N}_0}$ satisfy Tur\'{a}n's inequality with nonnegative representation \eqref{eq:Deltamnexplred}. Moreover, if additionally $c_{1,n}<c_{n+1}$ for all $n\in\mathbb{N}$, then for every $n\in\mathbb{N}$ there exists a constant $K_n>0$ such that $\Delta_n(x)\geq K_n\cdot(1-x^2)$ for all $x\in[-1,1]$.
\end{corollary}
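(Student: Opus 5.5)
The plan is to deduce the corollary directly from Theorem~\ref{thm:kahlerturan2}, by showing that $c_{m+1,n}\leq c_{m,n+1}$ implies $a_{m,n+1}c_{m,n+1}\geq a_{m+1,n}c_{m+1,n}$. The natural tool is \eqref{eq:astrecm}, which links the two families of coefficients.

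First we rewrite the hypothesis of Theorem~\ref{thm:kahlerturan2}. Apply \eqref{eq:astrecm} with $n$ replaced by $n+1$; this gives $a_{m,n+2}c_{m,n+1}=c_{m+1,n+1}a_{m+1,n}$. Hence
\begin{equation*}
a_{m+1,n}c_{m+1,n}=\frac{a_{m,n+2}c_{m,n+1}}{c_{m+1,n+1}}\,c_{m+1,n},
\end{equation*}
and since $c_{m,n+1}>0$, the inequality $a_{m,n+1}c_{m,n+1}\geq a_{m+1,n}c_{m+1,n}$ is equivalent to
\begin{equation*}
a_{m,n+1}c_{m+1,n+1}\geq a_{m,n+2}c_{m+1,n}.
\end{equation*}
This still mixes indices, so a more direct route is preferable. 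Write $a=1-c$ in the original form, so that $a_{m,n+1}c_{m,n+1}=f(c_{m,n+1})$ and $a_{m+1,n}c_{m+1,n}=f(c_{m+1,n})$ with $f(t)=t(1-t)$. The function $f$ is increasing only on $[0,1/2]$, so monotonicity of $f$ alone does not suffice, and some extra information is needed.

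The extra information again comes from \eqref{eq:astrecm}: $c_{m+1,n}a_{m+1,n-1}=a_{m,n+1}c_{m,n}$. I intend to argue by induction on $n$ for fixed $m$, with all $m$ treated simultaneously. The base case is $n=1$, where $c_{m+1,1}a_{m+1,0}=c_{m+1,1}$ (since $a_{m+1,0}=1$) and \eqref{eq:astrecm} gives $c_{m+1,1}=a_{m,2}c_{m,1}$. The goal in the inductive step is an inequality of the form $a_{m+1,n}\geq a_{m,n+1}$, which follows from the hypothesis $c_{m+1,n}\leq c_{m,n+1}$. Combined with \eqref{eq:astrecm} shifted, $c_{m+1,n}a_{m+1,n}\cdot(a_{m+1,n-1}/a_{m+1,n})=a_{m,n+1}c_{m,n}$, this should lead to the comparison. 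Concretely, I would use
\begin{equation*}
a_{m+1,n}c_{m+1,n}=\frac{a_{m+1,n}}{a_{m+1,n-1}}\,a_{m,n+1}c_{m,n},
\end{equation*}
so the target becomes $c_{m,n+1}a_{m+1,n-1}\geq a_{m+1,n}c_{m,n}$. Here the hypothesis applied at index $n-1$ gives $a_{m+1,n-1}\geq a_{m,n}$, and we need to compare this with $a_{m+1,n}c_{m,n}\leq \dots$. This step, finding the right chain of inequalities, is the main obstacle. If it resists, I would fall back on the chain-sequence interpretation: $(c_{m+1,n})$ is the minimal parameter sequence of $(a_{m,n+1}c_{m,n})$, and minimal parameter sequences are dominated termwise by any other parameter sequence. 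This would let me compare $c_{m+1,n}$ against the shifted sequence $c_{m,n+1}$ in the appropriate direction.

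For the sharper estimate, the same reduction gives $a_{n+1}c_{n+1}>a_{1,n}c_{1,n}$ from $c_{1,n}<c_{n+1}$, as long as the strict version of the key comparison holds in the case $m=0$. The second part of Theorem~\ref{thm:kahlerturan2} then applies.
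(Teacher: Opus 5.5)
\textbf{There is a genuine gap.} Your reduction to Theorem~\ref{thm:kahlerturan2} is the right one. You also correctly note that $f(t)=t(1-t)$ is not monotone on $(0,1)$, so $c_{m+1,n}\le c_{m,n+1}$ alone does not give $f(c_{m+1,n})\le f(c_{m,n+1})$. But the proposal never supplies the missing information.

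Your inductive manipulations end at the target $c_{m,n+1}a_{m+1,n-1}\ge a_{m+1,n}c_{m,n}$, with no way to close it. Your chain-sequence fallback is aimed at the wrong comparison. Domination of the minimal parameter sequence by $(c_{m,n+1})$ would only reproduce the hypothesis. Moreover, $(c_{m,n+1})_{n\ge0}$ is not a parameter sequence of $(a_{m,n+1}c_{m,n})_{n\in\mathbb{N}}$, since that would require $a_{m,n}c_{m,n+1}=a_{m,n+1}c_{m,n}$. So the step you yourself flag as the main obstacle is left open, and it is the whole content of the corollary.

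The missing ingredient is the unconditional inequality $c_{m+1,n}<a_{m,n+1}$ for all $m\in\mathbb{N}_0$ and $n\in\mathbb{N}$, valid for any $(c_n)_{n\in\mathbb{N}}\subseteq(0,1)$. It follows from \eqref{eq:astrecm} by induction on $n$:
\begin{itemize}
\item Base case: $c_{m+1,1}=a_{m,2}c_{m,1}<a_{m,2}$.
\item Step: if $c_{m+1,n}<a_{m,n+1}$, then $a_{m+1,n}>c_{m,n+1}$, so $a_{m,n+2}c_{m,n+1}=c_{m+1,n+1}a_{m+1,n}>c_{m+1,n+1}c_{m,n+1}$, which gives $c_{m+1,n+1}<a_{m,n+2}$.
\end{itemize}
In chain-sequence language, $(a_{m,n+1})_{n\ge0}$ is itself a parameter sequence of $(a_{m,n+1}c_{m,n})_{n\in\mathbb{N}}$, because $(1-a_{m,n})a_{m,n+1}=c_{m,n}a_{m,n+1}$. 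Its initial term is $a_{m,1}>0$, so the minimal parameter sequence $(c_{m+1,n})$ lies strictly below it. This is the comparison your fallback should have used.

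Combined with the hypothesis, this yields $c_{m+1,n}\le\min\{c_{m,n+1},1-c_{m,n+1}\}\le 1/2$. Since $f$ is increasing on $[0,1/2]$ and $f(t)=f(1-t)$, you get $f(c_{m+1,n})\le f(c_{m,n+1})$, which is exactly the hypothesis of Theorem~\ref{thm:kahlerturan2}.

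For the second part, the same argument with $c_{1,n}<c_{n+1}$ gives $c_{1,n}<\min\{c_{n+1},a_{n+1}\}\le 1/2$. This yields the strict inequality $a_{1,n}c_{1,n}<a_{n+1}c_{n+1}$ required there.
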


\begin{proof}
We first \textit{claim} that if $(c_n)_{n\in\mathbb{N}}\subseteq(0,1)$ is arbitrary, then $c_{m+1,n}<a_{m,n+1}$ for all $m\in\mathbb{N}_0$ and $n\in\mathbb{N}$. This is a consequence of \eqref{eq:astrecm}: indeed, we have $c_{m+1,1}=a_{m,2}c_{m,1}<a_{m,2}$. Furthermore, if $n\in\mathbb{N}$ is arbitrary but fixed and $c_{m+1,n}<a_{m,n+1}$, then $a_{m,n+2}c_{m,n+1}=c_{m+1,n+1}a_{m+1,n}>c_{m+1,n+1}c_{m,n+1}$, so $c_{m+1,n+1}<a_{m,n+2}$. This establishes the claim. Therefore, by the condition we have $c_{m+1,n}\leq\min\{c_{m,n+1},a_{m,n+1}\}\leq1/2$ for all $m\in\mathbb{N}_0$ and $n\in\mathbb{N}$, and the first assertion follows from Theorem~\ref{thm:kahlerturan2}. If additionally $c_{1,n}<c_{n+1}$ for all $n\in\mathbb{N}$, then $c_{1,n}<\min\{c_{n+1},a_{n+1}\}\leq1/2$ for all $n\in\mathbb{N}$, so the second assertion follows from Theorem~\ref{thm:kahlerturan2}, too.
\end{proof}

\begin{remark}
Recall that our first general criterion Theorem~\ref{thm:kahlerturan} generalizes Theorem~\ref{thm:szwarcturan}, so the reader might ask whether Theorem~\ref{thm:kahlerturan2} is also a generalization of Theorem~\ref{thm:szwarcturan}. The following observations show that this is not the case: it can happen that $(c_n)_{n\in\mathbb{N}}\subseteq(0,1/2]$ is nondecreasing but $1/2>c_{m+1,n}>c_{m,n+1}$ for some $m,n\in\mathbb{N}$. An explicit example is provided by $c_1:=c_2:=\frac{1}{4}$ and $c_n:=\frac{1}{2}$ for $n\geq3$, where $c_{2,1}=33/208>2/13=c_{1,2}$. Furthermore, it can happen that $(c_n)_{n\in\mathbb{N}}\subseteq[1/2,1)$ is nonincreasing but $1/2>c_{m+1,n}>c_{m,n+1}$ for some $m,n\in\mathbb{N}$. An explicit example is provided by $c_1:=c_2:=c_3:=\frac{4}{5}$ and $c_n:=\frac{1}{2}$ for $n\geq4$, where $c_{3,1}=1316/11425>860/7769=c_{2,2}$.
\end{remark}

\begin{remark}
As analogue to \eqref{eq:Deltamnexplred}, a nonnegative representation of the Tur\'{a}n determinants in the spirit of Theorem~\ref{thm:kahlerturan} is available as a consequence of \eqref{eq:DeltanDeltanplus2combined}. However, it generally requires case differentiations (cf. the proof of Theorem~\ref{thm:kahlerturan}). In Section~\ref{sec:application}, we study a specific class of polynomials and can avoid such case differentiations.
\end{remark}

\section{Application to the class of generalized Chebyshev polynomials}\label{sec:application}

We now come to the announced result concerning Tur\'{a}n's inequality for the generalized Chebyshev polynomials, which is the third main result of our paper, and we will obtain this both as a consequence of Theorem~\ref{thm:kahlerturan} and as a consequence of Theorem~\ref{thm:kahlerturan2}/Corollary~\ref{cor:kahlerturan2}. As a preparation, we mention the following concerning the constants $C_{m,n}$ (cf. \eqref{eq:Pnast}): if $(P_n(x))_{n\in\mathbb{N}_0}=(T_n^{(\alpha,\beta)}(x))_{n\in\mathbb{N}_0}$ (with $\alpha,\beta>-1$), then
\begin{equation}\label{eq:Cnastgencheb}
C_{m,n}=-\frac{m+\alpha+1}{m+n+\alpha+\beta+2}\;(m,n\in\mathbb{N}_0).
\end{equation}
This can easily be seen from \eqref{eq:Pnast} by comparing the leading coefficients of $P_{m+1,n}(x)=T_n^{(m+\alpha+1,\beta)}(x)$ and $P_{m,n+2}(x)=T_{n+2}^{(m+\alpha,\beta)}(x)$ (which can be computed from \eqref{eq:threetermrec} and \eqref{eq:genchebrec}). We also recall the definition of the Pochhammer symbol $(a)_n:=\prod_{k=1}^n(a+k-1)$.

\begin{theorem}\label{thm:genchebTuran}
Let $(P_n(x))_{n\in\mathbb{N}_0}=(T_n^{(\alpha,\beta)}(x))_{n\in\mathbb{N}_0}$ with $\alpha,\beta>-1$. Then Tur\'{a}n's inequality is satisfied if and only if $\beta\leq0$. Moreover, if $\beta<0$, then for every $n\in\mathbb{N}$ there exists a constant $K_n>0$ such that $\Delta_n(x)\geq K_n\cdot(1-x^2)$ for all $x\in[-1,1]$. Furthermore, one has the following nonnegative representations of the Tur\'{a}n determinants (provided $\beta\in(-1,0]$): for every $n\in\mathbb{N}$, one has
\begin{align*}
&\Delta_{2n-1}(x)\\
&=\frac{(\beta+1)(n-1)!(\beta+1)_{n-1}}{(\alpha+1)_n(\alpha+\beta+2)_{n-1}}(1-x^2)\\
&\quad+\sum_{k=1}^{n-1}\frac{(2k+\alpha+\beta+1)(k+\beta+1)_{n-1-k}(k+1)_{n-1-k}}{(k+\alpha+\beta+1)(k+\alpha+1)_{n-k}(k+\alpha+\beta+1)_{n-k}}\\
&\quad\quad\quad\quad\times\left[(\beta+1)(k+\alpha+\beta+1)\left(T_{2k}^{(\alpha,\beta)}(x)\right)^2(1-x^2)-\beta k\left(x T_{2k}^{(\alpha,\beta)}(x)-T_{2k-1}^{(\alpha,\beta)}(x)\right)^2\right]\\
&=\frac{\beta+1}{\alpha+1}\left(T_{2n-2}^{(\alpha+1,\beta)}(x)\right)^2(1-x^2)\\
&\quad+\sum_{k=1}^{n-1}\frac{(n+\alpha+\beta+1)_{n-k}(n+\alpha+1)_{n-1-k}(k+\beta+1)_{n-1-k}(k)_{n-k}}{(2n-2k+\alpha+1)(\alpha+1)_{2n-2k}^2}\\
&\quad\quad\quad\quad\times\left[(\beta+1)(2n-k+\alpha)(k+\beta)\left(T_{2k-2}^{(2n-2k+\alpha+1,\beta)}(x)\right)^2(1-x^2)\right.\\
&\quad\quad\quad\quad\quad\quad\left.-\beta(2n-2k+\alpha+1)(2n-2k+\alpha)\left(T_{2k-1}^{(2n-2k+\alpha,\beta)}(x)\right)^2\right](1-x^2)^{2n-2k}
\end{align*}
and
\begin{align*}
&\Delta_{2n}(x)\\
&=\sum_{k=0}^{n-1}\frac{(2k+\alpha+\beta+2)(k+\beta+2)_{n-1-k}(k+1)_{n-1-k}}{(k+\alpha+1)(k+\alpha+1)_{n-k}(k+\alpha+\beta+2)_{n-k}}\\
&\quad\quad\quad\times\left[-\beta(k+\alpha+1)\left(T_{2k+1}^{(\alpha,\beta)}(x)\right)^2(1-x^2)\right.\\
&\quad\quad\quad\quad\quad\left.+(\beta+1)(k+\beta+1)\left(x T_{2k+1}^{(\alpha,\beta)}(x)-T_{2k}^{(\alpha,\beta)}(x)\right)^2\right]\\
&=\sum_{k=0}^{n-1}\frac{(n+\alpha+\beta+2)_{n-1-k}(n+\alpha+1)_{n-1-k}(k+\beta+2)_{n-1-k}(k+1)_{n-1-k}}{(2n-2k+\alpha)(\alpha+1)_{2n-1-2k}^2}\\
&\quad\quad\quad\times\left[-\beta(2n-2k+\alpha)(2n-2k+\alpha-1)\left(T_{2k+1}^{(2n-2k+\alpha-1,\beta)}(x)\right)^2\right.\\
&\quad\quad\quad\quad\quad\left.+(\beta+1)(2n-k+\alpha)(k+\beta+1)\left(T_{2k}^{(2n-2k+\alpha,\beta)}(x)\right)^2(1-x^2)\right]\\
&\quad\quad\quad\times(1-x^2)^{2n-1-2k}.
\end{align*}
\end{theorem}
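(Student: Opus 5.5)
Our plan has three parts: necessity via the behaviour near $x=1$ (or near $0$), sufficiency and the sharper estimate via Theorem~\ref{thm:kahlerturan}, and the two explicit representations by unrolling \eqref{eq:DeltanDeltanplus2combined} and \eqref{eq:Deltamnexpl}.

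\emph{Necessity.} Suppose $\beta>0$. Every $\Delta_n$ vanishes at $x=\pm1$, so it suffices to show that $\Delta_n(x)<0$ close to $1$ for some $n$. The computation at the beginning of Section~\ref{sec:general} already settles $n=2$: inserting $c_1=(1+\beta)/(\alpha+\beta+2)$ and $c_2=1/(\alpha+\beta+3)$ from \eqref{eq:genchebrec}, the condition $c_2\geq c_1/(1+c_1)$ becomes $\alpha+2\beta+3\geq(1+\beta)(\alpha+\beta+3)$. This simplifies to $0\geq\beta(\alpha+\beta+2)$, which holds exactly when $\beta\leq0$. Hence for $\beta>0$ we get $\Delta_2(x)<0$ near $x=1$. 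The same identity gives $c_2>c_1/(1+c_1)$ when $\beta<0$, which is the extra hypothesis needed for the $K_n$ statement.

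\emph{Sufficiency via Theorem~\ref{thm:kahlerturan}.} For $\beta\leq0$ we compute $A_n,B_n,C_n$ separately for odd and even $n$. Write $n=2k-1$ or $n=2k$, use \eqref{eq:genchebrec} and $a_j=1-c_j$, and factor the differences $B_n-A_n$ and $C_n-B_n$ into rational functions of $k,\alpha,\beta$. We expect that:
\begin{enumerate}[(i)]
\item for odd $n$, $a_{n+2}-c_{n+2}$ and $a_n-c_n$ have the sign of $\alpha-\beta$ up to positive factors;
\item for even $n$, $a_{n}-c_{n}=(n+\alpha+\beta+1)/(2n+\alpha+\beta+1)$ up to the appropriate index shift, which is positive;
\item the differences $B_n-A_n$ and $C_n-B_n$ factor as $-\beta$ times a positive quantity, or as $(\beta+1)$ times such a quantity.
\end{enumerate}
This would give one of the two alternative chains $0\leq A_n\leq B_n\leq C_n$ or $0\geq A_n\geq B_n\geq C_n$. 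The main obstacle is the odd case when $\alpha<\beta$: the signs flip there, and we must check that the chain $0\geq A_n\geq B_n\geq C_n$ still holds. We would first try to write each difference as a product of linear factors in $\alpha$ and $\beta$ and read off the signs. If the chain fails for some parameters, we would switch to Corollary~\ref{cor:kahlerturan2}. That route uses the fact that $P_{m,n}=T_n^{(\alpha+m,\beta)}$, so that $c_{m,n}$ is given by \eqref{eq:genchebrec} with $\alpha$ replaced by $\alpha+m$, and then verifies $c_{m+1,n}\leq c_{m,n+1}$ directly. For even $n$ this reduces to a comparison of the form $n/(2n+\alpha+\beta+3)\leq\ldots$. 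The $K_n$ statement then follows from Theorem~\ref{thm:kahlerturan} together with the strict inequality $c_2>c_1/(1+c_1)$ when $\beta<0$.

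\emph{Representations.} For the first formula in each pair, iterate \eqref{eq:DeltanDeltanplus2combined} downward from $\Delta_{2n-1}$ to $\Delta_1=K_1(1-x^2)$, and from $\Delta_{2n}$ to $\Delta_0=0$. The telescoped coefficients $c_{j+1}c_{j+2}A_j/(a_{j+1}a_{j+2}C_j)$ become Pochhammer quotients. The brackets
\[
a_{j+1}c_{j+2}(C_j-B_j)(1-x^2)P_{j+1}^2(x)+c_{j+1}c_{j+2}(B_j-A_j)\bigl(xP_{j+1}(x)-P_j(x)\bigr)^2
\]
produce the displayed terms with the factors $(\beta+1)$ and $-\beta$, both of which are nonnegative for $\beta\in(-1,0]$.

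For the second formula in each pair, apply \eqref{eq:Deltamnexplred} of Theorem~\ref{thm:kahlerturan2} with $P_{m,n}=T_n^{(\alpha+m,\beta)}$, $C_{m,n}$ from \eqref{eq:Cnastgencheb}, and $c_{m,n}$ from \eqref{eq:genchebrec} with $\alpha$ shifted by $m$. Group consecutive terms $k$ in pairs and simplify the products of the $t_{j,n-j}$ into the stated Pochhammer expressions. This step is lengthy but routine.
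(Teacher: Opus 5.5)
Your plan is essentially the paper's proof. It has the same three parts: necessity (and the strict inequality needed for the $K_n$ bound when $\beta<0$) from $c_2-c_1/(1+c_1)=-\beta(\alpha+\beta+2)/((\alpha+\beta+3)(\alpha+2\beta+3))$; sufficiency via Theorem~\ref{thm:kahlerturan}, where the paper notes that Corollary~\ref{cor:kahlerturan2} works equally well because $c_{m,2n}-c_{m+1,2n-1}=-\beta/(2n+m+\alpha+\beta+1)$ and $c_{m,2n+1}-c_{m+1,2n}=(\beta+1)/(2n+m+\alpha+\beta+2)$; and the representations obtained by telescoping \eqref{eq:DeltanDeltanplus2combined} and evaluating \eqref{eq:Deltamnexplred}.

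Two small corrections are needed.

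\emph{Your expectation (ii) is false.} In fact $a_{2k}-c_{2k}=(\alpha+\beta+1)/(2k+\alpha+\beta+1)$; the expression you wrote is $a_{2k}$ itself.
\begin{enumerate}[(i)]
\item For even indices, $A_{2k},B_{2k},C_{2k}$ are $(\alpha+\beta+1)k$, $(\alpha+\beta+1)(k+\beta+1)$, $(\alpha+\beta+1)(k+1)$ over a common positive denominator.
\item This exactly parallels the odd indices, where $A_{2k-1},B_{2k-1},C_{2k-1}$ are $(\alpha-\beta)(k+\beta)$, $(\alpha-\beta)k$, $(\alpha-\beta)(k+\beta+1)$.
\item Consequently, when $\alpha+\beta<-1$ the even indices also need the second alternative. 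This is harmless, since Theorem~\ref{thm:kahlerturan} lets the alternative depend on $n$.
\end{enumerate}

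\emph{The even telescoping should not start from ``$\Delta_0=0$''.} Start instead from the explicit formula for $\Delta_2$. Equivalently, use \eqref{eq:DeltanDeltanplus2combined} at $n=0$, where the $\Delta_0$-term carries the factor $A_0=0$ and so drops out.
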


For the special case $\beta=-1/2$ (ultraspherical polynomials), the existence of the constants $K_n$ (cf. above) was already observed in \cite{TN51} and also considered in \cite{BS09,Ni23,VL57}.

\begin{proof}[Proof of Theorem~\ref{thm:genchebTuran}]
We have $c_2-c_1/(1+c_1)=-\beta(\alpha+\beta+2)/((\alpha+\beta+3)(\alpha+2\beta+3))$, which particularly shows the necessity of $\beta\leq0$ for Tur\'{a}n's inequality by a consideration of $\Delta_2(x)$ (cf. the beginning of Section~\ref{sec:general}). The nontrivial part is the sufficiency. We use the notation of Theorem~\ref{thm:kahlerturan} and Theorem~\ref{thm:kahlerturan2}. From \eqref{eq:genchebrec}, for every $n\in\mathbb{N}$ we compute $A_{2n-1}=(\alpha-\beta)(n+\beta)/((2n+\alpha+\beta)(2n+\alpha+\beta+2))$, $A_{2n}=(\alpha+\beta+1)n/((2n+\alpha+\beta+1)(2n+\alpha+\beta+3))$, $B_{2n-1}=(\alpha-\beta)n/((2n+\alpha+\beta)(2n+\alpha+\beta+2))$, $B_{2n}=(\alpha+\beta+1)(n+\beta+1)/((2n+\alpha+\beta+1)(2n+\alpha+\beta+3))$, $C_{2n-1}=(\alpha-\beta)(n+\beta+1)/((2n+\alpha+\beta)(2n+\alpha+\beta+2))$, $C_{2n}=(\alpha+\beta+1)(n+1)/((2n+\alpha+\beta+1)(2n+\alpha+\beta+3))$. Now let $m\in\mathbb{N}_0$. Since, due to \eqref{eq:measuregencheb}, a transition from $(P_n(x))_{n\in\mathbb{N}_0}$ to $(P_{m,n}(x))_{n\in\mathbb{N}_0}$ corresponds to a transition from $\alpha$ to $m+\alpha$, we have $c_{m,2n-1}=(n+\beta)/(2n+m+\alpha+\beta)$, $c_{m,2n}=n/(2n+m+\alpha+\beta+1)$ as a consequence of \eqref{eq:genchebrec}.\footnote{Alternatively, these formulas for $c_{m,2n-1}$ and $c_{m,2n}$ follow from \eqref{eq:genchebrec} and the general recurrence relation \eqref{eq:astrecm}.} Hence, we have $c_{m,2n}-c_{m+1,2n-1}=-\beta/(2n+m+\alpha+\beta+1)$ and $c_{m,2n+1}-c_{m+1,2n}=(\beta+1)/(2n+m+\alpha+\beta+2)$ for all $n\in\mathbb{N}$. Therefore, the first assertions follow immediately from Theorem~\ref{thm:kahlerturan}, as well as from Theorem~\ref{thm:kahlerturan2}/Corollary~\ref{cor:kahlerturan2}. It remains to establish the nonnegative representations. These can be seen as follows: as a consequence of \eqref{eq:DeltanDeltanplus2combined}, we have
\begin{align*}
\Delta_{2n+1}(x)&=\frac{n(n+\beta)}{(n+\alpha+1)(n+\alpha+\beta+1)}\Delta_{2n-1}(x)\\
&\quad+\frac{(\beta+1)(2n+\alpha+\beta+1)}{(n+\alpha+1)(n+\alpha+\beta+1)}(1-x^2)P_{2n}^2(x)\\
&\quad+\frac{-\beta n(2n+\alpha+\beta+1)}{(n+\alpha+1)(n+\alpha+\beta+1)^2}(x P_{2n}(x)-P_{2n-1}(x))^2
\end{align*}
and
\begin{align*}
\Delta_{2n+2}(x)&=\frac{n(n+\beta+1)}{(n+\alpha+1)(n+\alpha+\beta+2)}\Delta_{2n}(x)\\
&\quad+\frac{-\beta(2n+\alpha+\beta+2)}{(n+\alpha+1)(n+\alpha+\beta+2)}(1-x^2)P_{2n+1}^2(x)\\
&\quad+\frac{(\beta+1)(n+\beta+1)(2n+\alpha+\beta+2)}{(n+\alpha+1)^2(n+\alpha+\beta+2)}(x P_{2n+1}(x)-P_{2n}(x))^2
\end{align*}
for all $n\in\mathbb{N}$. These recurrence relations and the formulas for $\Delta_1(x)$ and $\Delta_2(x)$ at the beginning of Section~\ref{sec:general} yield the first representations of $\Delta_{2n-1}(x)$ and $\Delta_{2n}(x)$. Moreover, \eqref{eq:Deltamnexplred} and \eqref{eq:Cnastgencheb} yield $s_{m,2n}=(\beta+1)/(m+\alpha+1)$, $s_{m,2n+1}=-\beta/(m+\alpha+1)$, $t_{m,2n}=(m+n+\alpha+\beta+2)n/(m+\alpha+1)^2$ and $t_{m,2n+1}=(m+n+\alpha+2)(n+\beta+1)/(m+\alpha+1)^2$ for all $m,n\in\mathbb{N}_0$ and the second representations.
\end{proof}

We now roughly sketch a variant for the proof of Tur\'{a}n's inequality for generalized Chebyshev polynomials which is more based on their specific structure and partially in the spirit of Gasper's proof for Jacobi polynomials. More precisely, we follow ideas from \cite{Ga72,Sk54,TN51} for ultraspherical and Jacobi polynomials. This variant relies on the relation \eqref{eq:quadratic} to the Jacobi polynomials, on differential equations and on the Jacobi forward shift. Moreover, at an appropriate stage it will make use of $(P_{1,n}(x))_{n\in\mathbb{N}_0}$ again but avoid considerations of $(P_{m,n}(x))_{n\in\mathbb{N}_0}$ for $m\geq2$. Let $(P_n(x))_{n\in\mathbb{N}_0}=(T_n^{(\alpha,\beta)}(x))_{n\in\mathbb{N}_0}$ with $\alpha>-1$ and $\beta\in(-1,0]$, and let $n\in\mathbb{N}$. Furthermore, let $x_1<\ldots<x_{2n}\in(-1,1)$ denote the zeros of $P_{2n}(x)$ (due to symmetry, one has $x_k+x_{2n+1-k}\equiv0$). As a consequence of the ``even part'' of \eqref{eq:quadratic} and the well-known differential equation for Jacobi polynomials \cite[(9.8.6)]{KLS10}, we obtain the differential equation $x(1-x^2)P_{2n}^{\prime\prime}(x)+(2\beta+1-(2\alpha+2\beta+3)x^2)P_{2n}^{\prime}(x)+4n(n+\alpha+\beta+1)x P_{2n}(x)=0$; moreover, \eqref{eq:genchebrec}, \eqref{eq:quadratic}, \eqref{eq:Cnastgencheb} and the forward shift for Jacobi polynomials \cite[(9.8.7)]{KLS10} yield $(1-x^2)P_{2n}^{\prime}(x)=-2n(n+\alpha+\beta+1)/(2n+\alpha+\beta+1)\cdot(P_{2n+1}(x)-P_{2n-1}(x))$ (cf. also \cite{Be01,Ko61}). Based on these ingredients and the relations $(P_{2n}^{\prime}(x))^2=P_{2n}^{\prime\prime}(x)P_{2n}(x)-(P_{2n}^{\prime}/P_{2n})^{\prime}(x)P_{2n}^2(x)$, $P_{2n}^{\prime}(x)=\sum_{k=1}^{2n}P_{2n}(x)/(x-x_k)$ and $(P_{2n}^{\prime}/P_{2n})^{\prime}(x)P_{2n}^2(x)=-\sum_{k=1}^{2n}P_{2n}^2(x)/(x-x_k)^2$, $\Delta_{2n}(x)$ can be expressed as
\begin{equation}\label{eq:Delta2nfinal}
\Delta_{2n}(x)=\frac{1-x^2}{n(n+\alpha+\beta+1)}\sum_{k=1}^n\frac{(-\beta(1-x_k^2)x^2+(\beta+1)x_k^2(1-x^2))P_{2n}^2(x)}{(x^2-x_k^2)^2}.
\end{equation}
The representation \eqref{eq:Delta2nfinal} shows the nonnegativity of $\Delta_{2n}(x)$ for all $x\in[-1,1]$. The Tur\'{a}n determinant $\Delta_{2n+1}(x)$ requires a different strategy because in general $(1-x^2)P_{2n+1}^{\prime}(x)$ cannot be expressed as a linear combination of $P_{2n+2}(x)$ and $P_{2n}(x)$, so the preceding strategy cannot directly be transferred. This is a major difference compared to Gasper's proof for Jacobi polynomials. However, we can proceed as follows: since $(P_{1,n}(x))_{n\in\mathbb{N}_0}=(T_n^{(\alpha+1,\beta)}(x))_{n\in\mathbb{N}_0}$, we know that also $\Delta_{1,2n}(x)\geq0$ for all $x\in[-1,1]$. Combining \eqref{eq:genchebrec} and \eqref{eq:Cnastgencheb} with Lemma~\ref{lma:bergszwarcturan} (which was obtained from \cite{BS09}), we see that $\Delta_{2n+1}(x)$ is a nonnegative linear combination of $(1-x^2)P_{1,2n}^2(x)$ and $(1-x^2)\Delta_{1,2n}(x)$. Hence, $\Delta_{2n+1}(x)\geq0\;(x\in[-1,1])$. Also the strict inequalities for $x\in(-1,1)$, as well as the estimations of the form $\Delta_n(x)\geq K_n\cdot(1-x^2)$ for $\beta<0$, can be worked out with this approach.\\

We now turn back to our general criteria Theorem~\ref{thm:kahlerturan}, Theorem~\ref{thm:kahlerturan2} and Corollary~\ref{cor:kahlerturan2} and ask the question whether the second assertions remain true if the additional conditions ``$c_2>c_1/(1+c_1)$'', ``$a_{n+1}c_{n+1}>a_{1,n}c_{1,n}$ for all $n\in\mathbb{N}$'' and ``$c_{1,n}<c_{n+1}$ for all $n\in\mathbb{N}$'' are dropped. Moreover, we ask the question whether the second assertion of Theorem~\ref{thm:genchebTuran} remains true if the condition ``$\beta<0$'' is replaced by the weaker condition ``$\beta\leq0$''. Both answers are negative, which follows from the representations given in Theorem~\ref{thm:genchebTuran} and the representation \eqref{eq:Delta2nfinal}:

\begin{corollary}\label{cor:kahlerturanmod}
Let $(P_n(x))_{n\in\mathbb{N}_0}=(T_n^{(\alpha,0)}(x))_{n\in\mathbb{N}_0}$ with $\alpha>-1$. Then $\lim_{x\to1}\Delta_{2n}(x)/(1-x^2)=0$ for every $n\in\mathbb{N}$.
\end{corollary}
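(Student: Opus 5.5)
Set $\beta=0$ in either nonnegative representation of $\Delta_{2n}(x)$ from Theorem~\ref{thm:genchebTuran}, divide by $1-x^2$, and let $x\to1$. The most direct route is the first representation of $\Delta_{2n}(x)$. At $\beta=0$ the term $-\beta(k+\alpha+1)(T_{2k+1}^{(\alpha,0)}(x))^2(1-x^2)$ drops out, which leaves
\begin{equation*}
\Delta_{2n}(x)=\sum_{k=0}^{n-1}\gamma_{n,k}\,(k+1)\left(x T_{2k+1}^{(\alpha,0)}(x)-T_{2k}^{(\alpha,0)}(x)\right)^2
\end{equation*}
with explicit positive constants $\gamma_{n,k}$. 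It therefore suffices to show that $(x T_{2k+1}(x)-T_{2k}(x))^2/(1-x^2)\to0$ as $x\to1$ for each $k$.

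Write $f_k(x):=x T_{2k+1}(x)-T_{2k}(x)$. This is a polynomial, and by the normalization $P_n(1)\equiv1$ it satisfies $f_k(1)=1-1=0$. Hence $f_k(x)=(1-x)g_k(x)$ for some polynomial $g_k$, and consequently
\begin{equation*}
\frac{f_k^2(x)}{1-x^2}=\frac{(1-x)g_k^2(x)}{1+x}\to0 \quad (x\to1).
\end{equation*}
Summing these finitely many terms gives $\lim_{x\to1}\Delta_{2n}(x)/(1-x^2)=0$.

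As a cross-check, the second representation of $\Delta_{2n}(x)$ in Theorem~\ref{thm:genchebTuran} reduces at $\beta=0$ to terms carrying the factor $(1-x^2)^{2n-2k}$ with $k\le n-1$, so every term is divisible by $(1-x^2)^2$ and the same limit follows. Alternatively, \eqref{eq:Delta2nfinal} with $\beta=0$ gives $\Delta_{2n}(x)=\frac{(1-x^2)^2}{n(n+\alpha+1)}\sum_k x_k^2P_{2n}^2(x)/(x^2-x_k^2)^2$, and since all $x_k\in(-1,1)$, the quotient by $1-x^2$ again tends to $0$.

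The only point needing care is that no term keeps a bare factor $(1-x^2)$ with a coefficient that is nonzero at $x=1$. At $\beta=0$ this is exactly what the vanishing of the $-\beta$ terms ensures, so there is no real obstacle.
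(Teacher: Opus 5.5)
Your proposal is correct and is essentially the paper's own argument: the paper obtains this corollary directly from the $\beta=0$ specializations of the representations of $\Delta_{2n}(x)$ in Theorem~\ref{thm:genchebTuran} and of \eqref{eq:Delta2nfinal}, in which the terms carrying a bare factor $(1-x^2)$ disappear. Your observation that each $xT_{2k+1}^{(\alpha,0)}(x)-T_{2k}^{(\alpha,0)}(x)$ vanishes at $x=1$ spells out the step the paper leaves implicit for the first representation.
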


\begin{remark}
Corollary~\ref{cor:kahlerturanmod} also demonstrates a remarkable difference between the generalized Chebyshev and the Jacobi polynomials, which are their ``ancestors'' w.r.t. quadratic transformations: if $(P_n(x))_{n\in\mathbb{N}_0}=(R_n^{(\alpha,\beta)}(x))_{n\in\mathbb{N}_0}$ with $\alpha,\beta>-1$, then $\lim_{x\to1}\Delta_n(x)/(1-x^2)=1/(2\alpha+2)\neq0$ for all $n\in\mathbb{N}$. This follows from \cite[p. 438]{Ga72}.
\end{remark}

\section{$2$-sieved polynomials}\label{sec:2sieved}

Let $(P_n(x))_{n\in\mathbb{N}_0}$ be as in the general setting described in Section~\ref{sec:intro} again. The corresponding ``$2$-sieved polynomials'' $(P_n(x;2))_{n\in\mathbb{N}_0}$ are defined via the recurrence relation $P_0(x;2)=1$, $x P_n(x;2)=a(n;2)P_{n+1}(x;2)+c(n;2)P_{n-1}(x;2)\;(n\in\mathbb{N}_0)$ with $c(n;2):=c_{n/2}$ for even $n$ and $c(n;2):=1/2$ for odd $n$, and with $a(n;2)\equiv1-c(n;2)$. Sieved polynomials, as well as various related concepts, have been studied in a series of papers by Ismail et al. In particular, we mention \cite{IL92}. Moreover, we mention the paper \cite{GvA88} by Geronimo and Van Assche. Now let
\begin{equation*}
\Delta_n(x;2):=(P_n(x;2))^2-P_{n+1}(x;2)P_{n-1}(x;2)
\end{equation*}
denote the Tur\'{a}n determinants which belong to $(P_n(x;2))_{n\in\mathbb{N}_0}$. In order to study Tur\'{a}n's inequality for $(P_n(x;2))_{n\in\mathbb{N}_0}$, it is clear that Szwarc's criterion Theorem~\ref{thm:szwarcturan} can only be applied if $c_n\equiv1/2$, i.e., for the Chebyshev polynomials of the first kind. However, as a consequence of Theorem~\ref{thm:kahlerturan} and as the fourth main result of this paper, we obtain the following criterion:

\begin{theorem}\label{thm:2sieved}
\begin{enumerate}[(i)]
\item Let $(c_n)_{n\in\mathbb{N}}\subseteq[1/3,1/2]$, and let $c_{n+1}\geq(1-c_n)/(3-4c_n)$ for all $n\in\mathbb{N}$. Then the polynomials $(P_n(x;2))_{n\in\mathbb{N}_0}$ satisfy Tur\'{a}n's inequality. Moreover, if $c_1>1/3$, then one has the following sharper estimation: for every $n\in\mathbb{N}$, there exists a constant $K_n>0$ such that $\Delta_n(x;2)\geq K_n\cdot(1-x^2)$ for all $x\in[-1,1]$.
\item Let $(c_n)_{n\in\mathbb{N}}\subseteq[1/2,1)$, and let $c_{n+1}\leq(3c_n-1)/(4c_n-1)$ for all $n\in\mathbb{N}$. Then for every $n\in\mathbb{N}$ there exists a constant $K_n>0$ such that $\Delta_n(x;2)\geq K_n\cdot(1-x^2)$ for all $x\in[-1,1]$. In particular, the polynomials $(P_n(x;2))_{n\in\mathbb{N}_0}$ satisfy Tur\'{a}n's inequality.
\end{enumerate}
\end{theorem}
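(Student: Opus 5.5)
We apply Theorem~\ref{thm:kahlerturan} to the recurrence coefficients $c'_n:=c(n;2)$ and $a'_n:=a(n;2)=1-c'_n$ of the $2$-sieved polynomials; both parts will then follow. Throughout, let $A_n,B_n,C_n$ denote the quantities of Theorem~\ref{thm:kahlerturan} formed with $c'_n,a'_n$.

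\emph{The condition on $c'_2$.} Since $c'_1=1/2$ and $c'_2=c_1$, the requirement $c'_2\geq c'_1/(1+c'_1)$ becomes $c_1\geq 1/3$. This holds in both parts. The strict version $c_1>1/3$ is what the sharper estimation in Theorem~\ref{thm:kahlerturan} needs. In (i) it is exactly the extra hypothesis, and in (ii) it is automatic because $c_1\geq1/2$.

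\emph{Odd $n$.} Here $c'_n=c'_{n+2}=a'_n=a'_{n+2}=1/2$, so $A_n=B_n=C_n=0$. Both alternatives of Theorem~\ref{thm:kahlerturan} hold trivially.

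\emph{Even $n=2k$.} Now $c'_n=c_k$, $c'_{n+1}=1/2$ and $c'_{n+2}=c_{k+1}$, which gives
\[
A_{2k}=c_k(1-2c_{k+1}),\quad B_{2k}=\tfrac12(1-c_k-c_{k+1}),\quad C_{2k}=(1-2c_k)c_{k+1}.
\]
Clearing the factor $2$ and rearranging, we get
\[
B_{2k}-A_{2k}=\tfrac12\bigl(c_{k+1}(4c_k-1)-(3c_k-1)\bigr),\qquad C_{2k}-B_{2k}=\tfrac12\bigl(c_{k+1}(3-4c_k)-(1-c_k)\bigr).
\]
The key elementary identity is
\[
(1-c)(4c-1)-(3c-1)(3-4c)=2(2c-1)^2\geq0 .
\]
For $c\in[1/3,3/4)$ both $4c-1$ and $3-4c$ are positive, so this identity says $(3c-1)/(4c-1)\leq(1-c)/(3-4c)$.

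\emph{Part (i).} Let $c_k,c_{k+1}\in[1/3,1/2]$. Then $A_{2k}\geq0$. Since $4c_k-1>0$ and $3-4c_k>0$, the inequality $A_{2k}\leq B_{2k}$ is equivalent to $c_{k+1}\geq(3c_k-1)/(4c_k-1)$. Likewise $B_{2k}\leq C_{2k}$ is equivalent to $c_{k+1}\geq(1-c_k)/(3-4c_k)$. The hypothesis gives the second inequality, and the identity shows the second implies the first. So the first alternative $0\leq A_{2k}\leq B_{2k}\leq C_{2k}$ holds.

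\emph{Part (ii).} Let $c_k,c_{k+1}\in[1/2,1)$. Then $A_{2k}\leq0$ and $C_{2k}\leq0$. The inequality $A_{2k}\geq B_{2k}$ is equivalent to the hypothesis $c_{k+1}\leq(3c_k-1)/(4c_k-1)$. For $B_{2k}\geq C_{2k}$, i.e. $c_{k+1}(3-4c_k)\leq 1-c_k$, we split by the sign of $3-4c_k$:
\begin{itemize}
\item If $c_k\geq3/4$, the left side is $\leq0<1-c_k$, so the inequality is automatic.
\item If $c_k<3/4$, it is equivalent to $c_{k+1}\leq(1-c_k)/(3-4c_k)$. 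This follows from the hypothesis and the identity, which gives $(3c_k-1)/(4c_k-1)\leq(1-c_k)/(3-4c_k)$.
\end{itemize}
So the second alternative $0\geq A_{2k}\geq B_{2k}\geq C_{2k}$ holds.

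With every hypothesis of Theorem~\ref{thm:kahlerturan} verified, that theorem yields Tur\'{a}n's inequality in both parts. Because $c'_2>c'_1/(1+c'_1)$ under the stated strictness conditions, it also yields the constants $K_n>0$. The main obstacle is the sign bookkeeping in (ii) when $c_k$ crosses $3/4$, together with spotting the identity $2(2c-1)^2$ that links the two thresholds.
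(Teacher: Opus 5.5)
Your proposal is correct and follows essentially the paper's proof. Both apply Theorem~\ref{thm:kahlerturan} to the sieved coefficients, observe that $A_n=B_n=C_n=0$ for odd $n$, and verify the even-index chains via the identity $(1-c)(4c-1)=(3-4c)(3c-1)+2(2c-1)^2$; the only cosmetic difference is that in (ii) the paper multiplies through by $3c_n-1>0$ rather than splitting on the sign of $3-4c_k$.
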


\begin{proof}
In view of Theorem~\ref{thm:kahlerturan} and the recurrence coefficients of the sieved polynomials, it is left to show that
\begin{equation}\label{eq:firstalternativecorolly}
0\leq c_n(a_{n+1}-c_{n+1})\leq\frac{1}{2}(a_n-c_{n+1})\leq(a_n-c_n)c_{n+1}
\end{equation}
or
\begin{equation}\label{eq:secondalternativecorollary}
0\leq c_n(c_{n+1}-a_{n+1})\leq\frac{1}{2}(c_{n+1}-a_n)\leq(c_n-a_n)c_{n+1}
\end{equation}
for every $n\in\mathbb{N}$. In fact, if $(c_n)_{n\in\mathbb{N}}$ is as in (i), then \eqref{eq:firstalternativecorolly} is always valid: the first inequality is clear, and the two later inequalities can be seen from the computations
\begin{align*}
&(1-c_n)(a_n-c_{n+1}-2c_n(a_{n+1}-c_{n+1}))\\
&=((3-4c_n)(3c_n-1)+2(2c_n-1)^2)c_{n+1}-(1-c_n)(3c_n-1)\\
&\geq(3-4c_n)(3c_n-1)c_{n+1}-(1-c_n)(3c_n-1)\\
&\geq0
\end{align*}
and $2(a_n-c_n)c_{n+1}-(a_n-c_{n+1})=(3-4c_n)c_{n+1}-(1-c_n)\geq0$ for all $n\in\mathbb{N}$. Moreover, if $(c_n)_{n\in\mathbb{N}}$ is as in (ii), then \eqref{eq:secondalternativecorollary} is always fulfilled: the first inequality is clear again, and one has $c_{n+1}-a_n-2c_n(c_{n+1}-a_{n+1})=3c_n-1-(4c_n-1)c_{n+1}\geq0$ and
\begin{align*}
&(3c_n-1)(2(c_n-a_n)c_{n+1}-(c_{n+1}-a_n))\\
&=(-(4c_n-1)(1-c_n)+2(2c_n-1)^2)c_{n+1}+(3c_n-1)(1-c_n)\\
&\geq-(4c_n-1)(1-c_n)c_{n+1}+(3c_n-1)(1-c_n)\\
&\geq0
\end{align*}
for every $n\in\mathbb{N}$.
\end{proof}

\begin{remark}
Note that if $(c_n)_{n\in\mathbb{N}}$ is as in Theorem~\ref{thm:2sieved} (i), then $(c_n)_{n\in\mathbb{N}}$ is particularly nondecreasing, whereas if $(c_n)_{n\in\mathbb{N}}$ is as in Theorem~\ref{thm:2sieved} (ii), then $(c_n)_{n\in\mathbb{N}}$ is nonincreasing (therefore, Theorem~\ref{thm:szwarcturan} yields that also $(P_n(x))_{n\in\mathbb{N}_0}$ satisfies Tur\'{a}n's inequality). One might ask whether Tur\'{a}n's inequality for $(P_n(x;2))_{n\in\mathbb{N}_0}$ is generally true under the more general conditions (i') ``$(c_n)_{n\in\mathbb{N}}\subseteq[1/3,1/2]$ is nondecreasing'' or (ii') ``$(c_n)_{n\in\mathbb{N}}\subseteq[1/2,1)$ is nonincreasing''. However, for both (i') and (ii') there are counterexamples: if $c_n\equiv1/3$, then $\Delta_4(19/20;2)\approx-0.003$, and if $c_n\equiv4/5$, then $\Delta_4(9/10;2)\approx-0.632$.
\end{remark}

Having studied $(T_n^{(\alpha,0)}(x))_{n\in\mathbb{N}_0}$ in Corollary~\ref{cor:kahlerturanmod}, we mention that the polynomials $(T_n^{(\alpha,\alpha)}(x))_{n\in\mathbb{N}_0}$ are just the $2$-sieved ultraspherical polynomials, which can easily be seen from \eqref{eq:genchebrec}. These polynomials were extensively studied in \cite[Section 2]{AAA84}. Moreover, recently certain expansions of these polynomials were studied in \cite{Ka23b}. Theorem~\ref{thm:2sieved} applies to this example.

\section{Further examples}\label{sec:furtherexamples}

In this last section, we turn back to our general criterion Theorem~\ref{thm:kahlerturan} another time and study three further examples $(P_n(x))_{n\in\mathbb{N}_0}$ which fit in our general setting as described in Section~\ref{sec:intro}. Example~\ref{example:first} shall demonstrate the applicability of Theorem~\ref{thm:kahlerturan} besides the class of generalized Chebyshev polynomials. Example~\ref{example:second} and Example~\ref{example:third} shall demonstrate the limits of the applicability and ideas of further possible generalizations.

\begin{example}\label{example:first}
Let $c_2\geq c_1/(1+c_1)$ and $c_n=1/2$ for $n\geq3$. It is easy to see from the recurrence relation \eqref{eq:threetermrec} that $\Delta_n(x)=\Delta_3(x)$ for all $n\geq3$. Using the notation of Theorem~\ref{thm:kahlerturan}, we compute $A_1=0$, $B_1=(1/2-c_1)c_2$, $C_1=1/2-c_1$, $A_2=0$, $B_2=(1/2-c_2)/2$, $C_2=1/2-c_2$ and $A_n=B_n=C_n=0$ for $n\geq3$. Hence, the conditions of Theorem~\ref{thm:kahlerturan} are satisfied. However, the conditions of Theorem~\ref{thm:szwarcturan} need not be satisfied (even if $c_2>c_1/(1+c_1)$).
\end{example}

\begin{example}\label{example:second}
Let $c_2\geq c_1/(1+c_1)$ and $c_n=c_2$ for $n\geq3$. It is easy to see from the recurrence relation \eqref{eq:threetermrec} that $\Delta_n(x)=(c_2/a_2)^{n-2}\Delta_2(x)\;(n\geq2)$. Therefore, Tur\'{a}n's inequality is satisfied, and if $c_2>c_1/(1+c_1)$, then for every $n\in\mathbb{N}$ there exists a constant $K_n>0$ such that $\Delta_n(x)\geq K_n\cdot(1-x^2)$ for all $x\in[-1,1]$. Using the notation of Theorem~\ref{thm:kahlerturan}, we compute $A_1=c_1(1-2c_2)$ and $B_1-A_1=a_2(c_2-c_1)$. Therefore, if $c_2$ is sufficiently close to $c_1/(1+c_1)$, then $A_1>0$ but $B_1<A_1$, so the conditions of Theorem~\ref{thm:kahlerturan} are not always fulfilled. Nevertheless, we mention that Theorem~\ref{thm:kahlerturan} can be modified in such a way that the example is covered (put reasonable conditions on $\Delta_3(x)$ and conclude from $\Delta_n(x)$ to $\Delta_{n+2}(x)$ only for $n\geq2$).
\end{example}

\begin{example}\label{example:third}
Let $c_n=2n/(4n+3)$ if $3|n$ and $c_n=1/2$ else ($(P_n(x))_{n\in\mathbb{N}_0}$ corresponds to the ``$3$-sieved ultraspherical polynomials'' for $\alpha=-1/4$). Using the recurrence relation \eqref{eq:threetermrec}, one can establish the nonnegative representations $\Delta_{3n-2}(x)=(P_{3n-1}(x)-x P_{3n-2}(x))^2+(1-x^2)P_{3n-2}^2(x)$, $\Delta_{3n-1}(x)=(P_{3n}(x)-x P_{3n-1}(x))^2+(1-x^2)P_{3n-1}^2(x)$, $\Delta_{3n}(x)=(n-1)!/((3/2)_n(x^2+1))\cdot\sum_{k=0}^{n-1}(3/2)_k/k!\cdot[((x^2+1)P_{3k}(x)-2x^3P_{3k+1}(x))^2+(1-x^2)^2P_{3k+1}^2(x)]$ for all $n\in\mathbb{N}$. Since $A_{3n-2}=1/(8n+2)$ and $C_{3n-2}=0$ for all $n\in\mathbb{N}$ (with the notation of Theorem~\ref{thm:kahlerturan}), the conditions of Theorem~\ref{thm:kahlerturan} are not fulfilled. In contrast to Example~\ref{example:second}, this cannot be overcome by just adjusting the initial conditions and so on. However, the example shows how conclusions from $\Delta_n(x)$ to $\Delta_{n+k}(x)$ for $k\in\mathbb{N}\backslash\{1,2\}$ (which helped us to find the formula for $\Delta_{3n}(x)$) may work, too. Sieved ultraspherical polynomials were also studied in \cite{BP01}, and the arguments of the reference are related to ours.
\end{example}

We finally mention that Theorem~\ref{thm:kahlerturan2} seems to be less suitable for further classes because explicit computations of $(a_{m,n})_{n\in\mathbb{N}}$ and $(c_{m,n})_{n\in\mathbb{N}}$ will generally be difficult, even if $(a_n)_{n\in\mathbb{N}}$ and $(c_n)_{n\in\mathbb{N}}$ are known. However, we emphasize that Theorem~\ref{thm:kahlerturan2} might be helpful as soon as one studies a class of orthogonal polynomials whose orthogonalization measures are of the form $(1-x^2)^{\alpha}f(x)\chi_{(-1,1)}(x)\,\mathrm{d}x$ ($f$ independent of $\alpha$) with explicitly known recurrence coefficients $a_n(\alpha)$ and $c_n(\alpha)$ because then $a_{m,n}(\alpha)=a_n(m+\alpha)$ and $c_{m,n}(\alpha)=c_n(m+\alpha)$ (like for generalized Chebyshev polynomials in Section~\ref{sec:application}). Moreover, the recurrence relation \eqref{eq:astrecm} and the relation to chain sequences might be helpful for further classes, too.

\bibliography{TuranGeneralGenCheb}
\bibliographystyle{amsplain}

\end{document}